\def\N{{\mathbb{N}}}
\def\R{{\mathbb{R}}}
\def\Q{{\mathbb{Q}}}
\def\Z{{\mathbb{Z}}}
\def\dt{{\textrm{d}t}}
\def\d{{\textrm{d}}}
\def\IMF{{\textrm{IMF}}}
\def\iDTFT{{\textrm{iDTFT}}}
\def\DTFT{{\textrm{DTFT}}}
\newtheorem{theorem}{Theorem}
\newtheorem{proposition}{Proposition}
\newtheorem{corollary}{Corollary}
\newtheorem{definition}{Definition}
\newtheorem{problem}{Problem}
\title{One or two frequencies? The Iterative Filtering answers}
\author{Antonio Cicone\thanks{DISIM, Universit\`a degli Studi dell'Aquila, L'Aquila, Italy, and Istituto di Astrofisica e Planetologia Spaziali, INAF, Roma, Italy, and
Istituto Nazionale di Geofisica e Vulcanologia, Roma, Italy. antonio.cicone@univaq.it}, 	
Stefano Serra-Capizzano \thanks{Department of Science and High Technology, University of Insubria, Como, Italy, and Division of Scientific Computing, Department of Information Technology, Uppsala University, Uppsala, Sweden. stefano.serrac@uninsubria.it, stefano.serra@it.uu.se},
Haomin Zhou\thanks{School of Mathematics, Georgia Institute of Technology, Atlanta, GA, U.S.A. hmzhou@gatech.edu}}
\begin{document}

\maketitle

\begin{abstract}
The Iterative Filtering method is a technique aimed at the decomposition of non-stationary and non-linear signals into simple oscillatory components. This method, proposed a decade ago as an alternative technique to the Empirical Mode Decomposition, has been used extensively in many applied fields of research and studied, from a mathematical point of view, in several papers published in the last few years. However, even if its convergence and stability are now established both in the continuous and discrete setting, it is still an open problem to understand up to what extent this approach can separate two close-by frequencies contained in a signal.

In this paper, following the studies conducted on the Empirical Mode Decomposition and the Synchrosqueezing methods, we analyze in detail the abilities of the Iterative Filtering algorithm in extracting two stationary frequencies from a given signal. In particular, after reviewing the Iterative Filtering technique and its known properties, we present new theoretical results and numerical evidence showing the ability of this technique in addressing the fundamental ``one or two frequencies'' question.
\end{abstract}

\section{Introduction}\label{sec:Intro}
Signals consisting of multiple oscillatory components, that are changing their amplitude and frequency while propagating in time, are generated in a great variety of experiments. The analysis of such signals requires a good methodological approach and mathematical apparatus, which allows to find out the main features of the signal by a signal transformation. The most general methods used for the study of stationary signals are wavelet and Fourier transforms analysis. However these methods proved to be limited in identifying the main features when the signals are of non-stationary type. The representation of non-stationary signals in both time and frequency domain is very important for signal analysis in various applications such as, for instance, speech signal analysis and processing, biomedical signal processing, telecommunication engineering, mechanical engineering, seismic signal processing, and many more. Indeed, the signals studied in the previously mentioned areas are non-stationary: their amplitude and frequency parameters vary with respect to time \cite{B.Boashash2003}.

Given the need to study and analyze non-stationary signals, various techniques have been developed over the decades to improve the behavior and performance of wavelet and Fourier transform based methods in dealing with such kind of signals. We can think, for instance, to the reassignment method \cite{auger1995improving}, and the Synchrosqueezed wavelet transform \cite{daubechies2011synchrosqueezed}.

In 1998 a completely new kind of method was introduced by Huang et al. in \cite{huang1998empirical}, the so called Empirical mode decomposition (EMD). This is an iterative method which allows to analyze non-stationary signals stemming from nonlinear systems. By using the EMD we decompose a given signal into simple components. At the end, the original signal can be expressed as a sum of amplitude and frequency modulated (AM-FM) functions called ``intrinsic mode functions'' (IMFs), plus a final monotonic trend. Over the years, EMD proved to be a powerful method which had a broad impact in many applied field of research, see e.g. \cite{echeverria2001application,boudraa2007noise,lei2013review,stallone2020new} and references there in. However, its convergence analysis is still an open problem, e.g. \cite{huang2009convergence,huang2014introduction,ge2018theoretical}.

Iterative Filtering (IF) method is an iterative algorithm alternative to the EMD, that has been introduced by Lin et al. in 2009 \cite{lin2009iterative}. It uses the same algorithm framework as the original EMD, but the moving average of a signal $s(x),\, x\in R$, is derived by the convolution of $s(x)$ with low pass filters. To construct smooth low pass filters, we can employ a Fokker-Planck equation, a second order partial differential equation. The derived filters have compact support and we call them FP filters. The IF, even if published only few years ago, already had an impact in many applied fields of research, e.g. \cite{coifman2017,sharma2017,mitiche2018,li2018entropy,stallone2020new}. Furthermore, this algorithm has been extensively studied in recent years, and its mathematical convergence and stability can be guaranteed a priori both in the continuous and the discrete setting \cite{cicone2016adaptive,cicone2019Direct,cicone2021numerical}. Nevertheless, its ability in separating two close-by frequencies has not yet been studied systematically.

In this work, taking a cue from the study conducted on the EMD first \cite{flandrin2007oneOrTwo}, and on the Synchrosqueezed wavelet transform afterward \cite{wu2011oneOrTwo}, we analyze what kind of separation can (or cannot) be achieved for two-tones composite signals when using the IF method.

The outline of the paper is as follows. In Section 2 we introduce some basic definitions and we present a brief explanation regarding the Iterative Filtering (IF) method in both continuous and discrete settings. We provide theoretical results related to convergence analysis of this technique, when applied to the separation of two stationary frequencies, and show some numerical results. Section 3 concludes the paper.

\section{IF basics}\label{sec:IF}

Let us start by quickly reviewing the algorithm and its features. For a more detailed presentation we refer the interested reader to the paper \cite{cicone2021numerical}.

One of the fundamental ingredients needed in the following is the definition of window/filter function.
\begin{definition}\label{def:window}
A filter/window $w$ is a nonnegative and even continuous function compactly supported  in $[-L,\ L]$, $L>0$, such that $\int_\R w(z)\d z=\int_{-L}^{L} w(z)\d z=1$. $L$ is called \emph{filter length} and represents the half support length of $w$
\end{definition}
Furthermore, from now on, the notation $\widehat{f}(\xi)$ will be used to represent the Fourier transform of a function $f$ computed at the frequency $\xi$.
The IF pseudocode is given in Algorithm \ref{algo:IF}
\begin{algorithm}
\caption{\textbf{Iterative Filtering} IMF = IF$(s)$}\label{algo:IF}
\begin{algorithmic}
\STATE IMF = $\left\{\right\}$
\WHILE{the number of extrema of $s$ $\geq 2$}
\STATE $s_1 = s$
\WHILE{the stopping criterion is not satisfied}
\STATE  compute the filter length $L_m$ for $s_{m}(x)$
\STATE  $s_{m+1}(x) = s_{m}(x) -\int_{-L_m}^{L_m} s_m(x+t)w_m(t)\dt$
\STATE  $m = m+1$
\ENDWHILE
\STATE IMF = IMF$\,\cup\,  \{ s_{m}\}$
\STATE $s=s-s_{m}$
\ENDWHILE
\STATE IMF = IMF$\,\cup\,  \{ s\}$
\end{algorithmic}
\end{algorithm}
where $w_m(t)$ is a given filter, like one of the Fokker-Plank filters proposed in \cite{cicone2016adaptive}.

The algorithm contains two loops: an inner and an outer loop, the second and first while loop in the pseudocode, respectively. The former captures a single IMF, while the latter produces all the IMFs embedded in a signal.

Assuming $s_1=s$, the key step in the algorithm is the moving average computation of $s_m$ performed as
\begin{equation}\label{eq:Mov_Average}
\mathcal{L}_m(s_m)(x)=\int_{-L_m}^{L_m} s_m(x+t)w_m(t)\dt,
\end{equation}
which represents the convolution of the signal itself with the window/filter $w_m(t)$.

The moving average is then subtracted from $s_m$ to obtain its fluctuation part
\begin{equation}\label{eq:flactuations}
\mathcal{M}_{m}(s_m)= s_m-\mathcal{L}_m(s_m)=s_{m+1}.
\end{equation}

$\textrm{IMF}_1$ is computed by repeating iteratively this procedure on the signal $s_m$, $m\in\N$, until a stopping criterion is satisfied \cite{cicone2016adaptive}.

In order to produce the subsequent IMFs we apply the same procedure to the remainder signal $r=s-\sum_{i=1}^{k}\textrm{IMF}_i$, where $k$ is the number of previously extracted IMFs.

The algorithm stops when $r$ becomes a trend signal, meaning it has at most one local extremum.
Regarding the filter length $L_m$, its selection is performed at the first step of each inner loop and then it is kept constant throughout the entire inner loop. Hence $L_m=L_1=L$ for every $m\geq 1$.

From now on we assume that the algorithm is making always an exact filter length selection. However it is important to remind that the mask length selection is a problem per se. In \cite{lin2009iterative,cicone2016adaptive,cicone2021numerical} the authors suggest different approaches to its computation. Furthermore, we remind that the computation of the filter length has to be based on the signal itself, in order to make the method nonlinear \cite{cicone2021numerical}.

In this work we want to study the ability of the IF algorithm to resolve frequencies of a given signal especially when they are close each other. In order to address this question Flandrin and Rilling in  \cite{flandrin2007oneOrTwo} propose to study the signal
\begin{equation}\label{eq:signal}
s(x, a, f) = \cos(2\pi x)+a \cos(2\pi f x+\phi) \qquad x\in\R \textrm{ and } f \in (0, 1).
\end{equation}
The term $\cos( 2\pi x)$ is referred to in the following as the high frequency component (HF) and the second term as the low frequency one (LF).

\subsection{The continuous setting}

The goal of this section is to study the ability of the IF method to decompose properly the signal \eqref{eq:signal} into two pure tones.

We start by recalling the following theorem which regards the convergence analysis of the Iterative Filtering inner loop.

\begin{theorem}[Convergence of the Iterative Filtering method Theorem\cite{cicone2016adaptive,huang2009convergence}]\label{thm:theo_1}
Given the filter function $w(t)$ in $L^2\left([-L,L]\right)$, and let $s(x)\in L^2(\mathbb{R})$. \newline
If $|1- \widehat{w}(\xi)| < 1 $ or $\widehat{w}(\xi)=0$,

Then
$\{\mathcal{M}^m(s)\}$ converges and
\begin{equation}\label{eq:IMF_cont}
\textrm{IMF}_1 = \lim\limits_{m\rightarrow \infty}{\mathcal{M}^m(s)(x)}= \int_{-\infty}^{\infty} \widehat{s}(\xi) \chi_{\{\widehat{w}(\xi)=0 \}}
 e^{2\pi i \xi x} \textrm{d}\xi.
\end{equation}
\end{theorem}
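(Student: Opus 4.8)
The plan is to diagonalize the operator $\mathcal{M}$ by passing to the Fourier side, where iteration and limits become transparent. First I would note that, since the filter $w$ is even (Definition~\ref{def:window}), the moving average in \eqref{eq:Mov_Average} coincides with the convolution $\mathcal{L}(s)=s*w$, so by the convolution theorem $\widehat{\mathcal{L}(s)}(\xi)=\widehat{s}(\xi)\,\widehat{w}(\xi)$. The fluctuation step \eqref{eq:flactuations} then acts diagonally,
\begin{equation*}
\widehat{\mathcal{M}(s)}(\xi)=\widehat{s}(\xi)\bigl(1-\widehat{w}(\xi)\bigr).
\end{equation*}
Because the filter (and hence the multiplier $1-\widehat{w}$) is frozen throughout the inner loop, iterating $m$ times is immediate:
\begin{equation*}
\widehat{\mathcal{M}^m(s)}(\xi)=\widehat{s}(\xi)\bigl(1-\widehat{w}(\xi)\bigr)^m.
\end{equation*}

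Next I would study the scalar sequence $\bigl(1-\widehat{w}(\xi)\bigr)^m$ pointwise in $\xi$. Where $\widehat{w}(\xi)=0$, every term equals $1$ and the limit is $1$; where $|1-\widehat{w}(\xi)|<1$, the geometric factor drives the limit to $0$. Under the stated hypothesis exactly one of these alternatives holds at (almost) every frequency, so the pointwise limit is
\begin{equation*}
\lim_{m\to\infty}\widehat{\mathcal{M}^m(s)}(\xi)=\widehat{s}(\xi)\,\chi_{\{\widehat{w}(\xi)=0\}},
\end{equation*}
which is precisely the Fourier transform of the right-hand side of \eqref{eq:IMF_cont}.

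The main obstacle is upgrading this pointwise convergence in frequency to convergence of $\mathcal{M}^m(s)$ in $L^2(\mathbb{R})$, which is what makes the inverse transform in \eqref{eq:IMF_cont} meaningful. Here I would exploit that $w\ge 0$ with $\int w=1$ forces $|\widehat{w}(\xi)|\le 1$, so under the hypothesis the multiplier obeys the uniform bound $|1-\widehat{w}(\xi)|\le 1$; consequently $\bigl|\widehat{\mathcal{M}^m(s)}(\xi)-\widehat{s}(\xi)\chi_{\{\widehat{w}(\xi)=0\}}\bigr|^2\le 4\,|\widehat{s}(\xi)|^2$, and the dominating function is integrable because $s\in L^2(\mathbb{R})$ gives $\widehat{s}\in L^2(\mathbb{R})$ by Plancherel. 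The Dominated Convergence Theorem then sends the $L^2$ norm of the difference to $0$, and a final application of Plancherel transfers the convergence back to the physical domain, yielding \eqref{eq:IMF_cont}. I expect the only genuinely delicate point to be verifying that the dichotomy holds at a.e.\ $\xi$: since $w$ is real and even, $\widehat{w}$ is real with $|\widehat{w}(\xi)|\le 1$, so the condition reduces to $\widehat{w}(\xi)\ge 0$, which for the admissible (Fokker--Planck) filters follows from their construction and is exactly what secures the uniform bound used above.
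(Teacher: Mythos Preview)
The paper does not actually prove Theorem~\ref{thm:theo_1}; it is stated as a recalled result from the cited references \cite{cicone2016adaptive,huang2009convergence}, with no proof given here. Your argument is correct and is precisely the standard proof used in those references: diagonalize $\mathcal{M}$ via the Fourier transform, iterate the scalar multiplier $(1-\widehat{w}(\xi))^m$, read off the pointwise limit from the dichotomy in the hypothesis, and upgrade to $L^2$ convergence by dominated convergence with majorant $4|\widehat{s}|^2$ and Plancherel. Your closing remark that the dichotomy amounts to $\widehat{w}\ge 0$ also lines up with the paper's observation immediately following the theorem, namely that taking $w=h\ast h$ (so that $\widehat{w}=\widehat{h}^{\,2}\ge 0$) automatically secures the hypothesis.
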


We observe that given $h:[-\frac{L}{4},\frac{L}{4}]\rightarrow\R$, $z\mapsto h(z)$, nonnegative, symmetric, with $\int_\R h(z)\d z=\int_{-\frac{L}{4}}^{\frac{L}{4}} h(z)\d z=1$, it is sufficient to construct the window $w$ as convolution of $h$ with itself to ensure the method convergence to the limit function \eqref{eq:IMF_cont}, which depends only on the shape of the filter function itself and the support length selected by the method \cite{cicone2016adaptive,cicone2019spectral}.

In general we can assume that the filter function $w_L$ supported on $[-L,\ L]$ is defined as some scaling of an a priori fixed filter shape $w:[-1,1]\rightarrow\R$.

For simplicity, from now on, we consider the linear scaling
\begin{equation}\label{eq:w_m_linear}
w_L(x)=\frac{1}{L}w\left(\frac{x}{L}\right).
\end{equation}

We are now ready to state the following theorem.
\begin{theorem}\label{thm:IF_separation}
Given the signal $s$ defined in \eqref{eq:signal}, assuming that the IF method is selecting accurately the mask length of the doubly convolved filter  $w_L$ so that the lowest positive frequency of the $\widehat{w_L}$ function with zero value corresponds to frequency $1$,

Then IF algorithm in the continuous setting can always resolve the signal \eqref{eq:signal} into two components no matter how close the frequency $f$ of the LF component is to 1.
\end{theorem}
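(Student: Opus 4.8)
The plan is to compute the first IMF directly through the frequency-domain characterization furnished by Theorem \ref{thm:theo_1}, and to show that it captures exactly the HF component $\cos(2\pi x)$, leaving the LF component as the remainder. First I would record the (distributional) Fourier transform of the signal \eqref{eq:signal}. Being a superposition of two pure tones, $\widehat{s}$ is supported on the four points $\xi\in\{\pm 1,\pm f\}$: the HF term contributes atoms of weight $1/2$ at $\xi=\pm 1$, while the LF term contributes atoms of weight $\tfrac{a}{2}e^{\pm i\phi}$ at $\xi=\pm f$.

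Next I would establish the key range property of the doubly convolved window. Since $w$ is built as a self-convolution of a real, even base filter, its transform factors as a square of a real quantity, hence $\widehat{w}\geq 0$; by the scaling \eqref{eq:w_m_linear} the same holds for $w_L$, so $\widehat{w_L}(\xi)\geq 0$ for every $\xi$. Together with the normalization $\widehat{w_L}(0)=\int_\R w_L=1$ this gives $0\leq \widehat{w_L}(\xi)\leq 1$ everywhere. Consequently the hypothesis of Theorem \ref{thm:theo_1} is satisfied at every frequency: wherever $\widehat{w_L}(\xi)>0$ one has $|1-\widehat{w_L}(\xi)|=1-\widehat{w_L}(\xi)<1$, and otherwise $\widehat{w_L}(\xi)=0$. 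By the standing assumption the smallest positive zero of $\widehat{w_L}$ sits precisely at $\xi=1$, so evenness of $\widehat{w_L}$ forces $\widehat{w_L}(\pm 1)=0$, while $\widehat{w_L}(\xi)>0$ for all $\xi$ with $0<|\xi|<1$, in particular at $\xi=\pm f$ because $0<f<1$.

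Feeding these facts into the limit formula \eqref{eq:IMF_cont}, the indicator $\chi_{\{\widehat{w_L}(\xi)=0\}}$ equals $1$ at $\xi=\pm 1$ and $0$ at $\xi=\pm f$, so only the atoms at $\pm 1$ survive the integral and $\textrm{IMF}_1=\cos(2\pi x)$ identically. The remainder $s-\textrm{IMF}_1 = a\cos(2\pi f x+\phi)$ is then exactly the LF component, yielding the claimed perfect separation for every $f\in(0,1)$, uniformly in how close $f$ lies to $1$.

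The main obstacle I anticipate is that $s\notin L^2(\mathbb{R})$, whereas Theorem \ref{thm:theo_1} is stated for $L^2$ signals, and the transforms above are genuine distributions. I would bridge this gap by a mode-by-mode argument that bypasses any continuous-spectrum integration. Observe that, since $w_L$ is even, the moving average $\mathcal{L}_m$ is the convolution $s\ast w_L$, so in the frequency domain the fluctuation operator $\mathcal{M}$ acts as multiplication by $1-\widehat{w_L}(\xi)$, and $\mathcal{M}^m$ multiplies the coefficient of each tone by $(1-\widehat{w_L}(\xi))^m$. Tracking the four frequencies separately, the factor at $\xi=\pm 1$ stays equal to $1$, whereas the factor at $\xi=\pm f$ satisfies $0<(1-\widehat{w_L}(f))^m\to 0$ as $m\to\infty$. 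This renders the convergence entirely elementary and fully legitimizes the use of \eqref{eq:IMF_cont} in the present atomic-spectrum setting.
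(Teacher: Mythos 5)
Your proposal is correct and follows essentially the same route as the paper's own proof: both arguments invoke the limit formula \eqref{eq:IMF_cont} from Theorem \ref{thm:theo_1}, use the mask-length assumption to place the smallest positive zero of $\widehat{w_L}$ at $\xi=1$, and conclude that the indicator $\chi_{\{\widehat{w_L}(\xi)=0\}}$ retains the HF atoms at $\pm 1$ while discarding the LF atoms at $\pm f$ for every $f\in(0,1)$. Your treatment is in fact somewhat more careful than the paper's: you explicitly verify the hypothesis $0\leq\widehat{w_L}\leq 1$ from the doubly convolved structure, and your mode-by-mode eigenfunction argument honestly addresses the fact that $s\notin L^2(\R)$, a point the paper's proof passes over in silence.
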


\begin{proof}
We start by recalling that, if we define $\widehat{w}(\xi)=\int_{-\infty}^{+\infty} w(x) e^{-i\xi x 2 \pi} \d x$, then
\begin{equation}\label{eq:fft_w_L}
\widehat{w_L}(\xi)=\int_{-\infty}^{+\infty} \frac{1}{L}w\left(\frac{x}{L}\right) e^{-i\xi \frac{x}{L} L 2 \pi} \d x=\widehat{w}(L\xi).
\end{equation}
Therefore, if $\xi_0$ is a root of $\widehat{w}(\xi)=0$, then $\frac{\xi_0}{L}$ is a root of $\widehat{w_L}(\xi)=0$ because $\widehat{w_L}\left(\frac{\xi_0}{L}\right)=\widehat{w}\left(L\frac{\xi_0}{L}\right)=\widehat{w}(\xi_0)=0$.

Furthermore, we assume that the IF method selects properly the mask length $L$ so that the lowest positive frequency of the $\widehat{w_L}$ function with zero value corresponds to frequency $1$.
Hence, based on \eqref{eq:IMF_cont}, the only component that is captured in the first IMF by the IF algorithm correspond to the HF component contained in $s$.

Finally, since $w$ are compactly supported functions, $\widehat{w}$ are defined on $\R$ and they have zeros which are isolated points.

Then, from the previous observations, it follows that IF algorithm is able to separate exactly the two components of $s$, no matter how close the frequency $f$ of the LF component is to 1.
\end{proof}

\subsection{The continuous setting with a stopping criterion}
It is reasonable to introduce some stopping criterion \cite{cicone2016adaptive,cicone2021numerical} in order to achieve a decomposition in finite time.

In particular we assume to use the following stopping criterion.
\begin{problem}\label{stopping_criterion}
Fixed $\delta > 0$ we want to find the minimum value $N_0\in\N$ such that \[\|\mathcal{M}^N(s)-\mathcal{M}^{N+1}(s)\|_{L^2}<\delta \qquad \forall N\geq N_0.\]
\end{problem}

If we do so, Algorithm \ref{algo:IF} converges in finite steps to an IMF whose explicit form is given in the following theorem.

\begin{theorem}[Convergence of the Iterative Filtering method with stopping criterion \cite{cicone2021numerical}]\label{thm:IF_inner_conv_stopping}
Given $s\in L^2(\R)$ and $w$ obtained as the convolution $h\ast h$, where $h$ is a filter/window, and fixed $\delta>0$.

Then, for the minimum $N_0\in\N$ such that $\left\| \mathcal{M}^N(s)(x)-\mathcal{M}^{N+1}(s)(x)\right\|_{L^2}<\delta$, $\forall N\geq N_0$, the first IMF is given by

\begin{equation}\label{eq:IMF_IF_stop}
\textrm{IMF}_1^\textrm{SC}=\mathcal{M}^N(s)(x)=\int_{\R} (1-\widehat{w}(\xi))^N \widehat{s}(\xi) e^{2\pi i \xi x}\d \xi \quad \forall N\geq N_0.
\end{equation}
\end{theorem}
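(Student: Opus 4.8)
The plan is to move everything to the Fourier side, where the fluctuation operator $\mathcal{M}$ acts as multiplication by a scalar symbol, and then read off both the closed form of $\mathcal{M}^N(s)$ and the eventual satisfaction of the stopping criterion. First I would compute the symbol of the moving average. Writing $\mathcal{L}(s)(x)=\int_{-L}^{L}s(x+t)w(t)\,\d t$ and transforming with the convention $\widehat{f}(\xi)=\int f(x)e^{-2\pi i\xi x}\,\d x$ fixed in the proof of Theorem \ref{thm:IF_separation}, the substitution $y=x+t$ factors the transform as $\widehat{\mathcal{L}(s)}(\xi)=\widehat{s}(\xi)\int w(t)e^{2\pi i\xi t}\,\d t=\widehat{w}(\xi)\,\widehat{s}(\xi)$, where the last equality uses that $w$ is real and even so that $\widehat{w}$ is real and even. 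Hence $\widehat{\mathcal{M}(s)}(\xi)=(1-\widehat{w}(\xi))\widehat{s}(\xi)$, and a one-line induction gives $\widehat{\mathcal{M}^N(s)}(\xi)=(1-\widehat{w}(\xi))^N\widehat{s}(\xi)$; inverting the transform yields exactly \eqref{eq:IMF_IF_stop}.

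Next I would exploit the hypothesis $w=h\ast h$, which is the structural assumption doing the real work. Since $h$ is a window it is real, even and nonnegative with $\int h=1$, so $\widehat{h}$ is real with $\widehat{h}(0)=1$ and $|\widehat{h}(\xi)|\le\int|h|=1$; by the convolution theorem $\widehat{w}(\xi)=\widehat{h}(\xi)^2$, whence $0\le\widehat{w}(\xi)\le1$ for every $\xi$. This pins $0\le 1-\widehat{w}(\xi)\le 1$, so the powers $(1-\widehat{w})^N$ stay bounded by $1$; without this nonnegativity a region where $\widehat{w}<0$ would give $1-\widehat{w}>1$ and divergence, which is precisely why the doubly convolved filter is needed.

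It remains to produce the least $N_0$. Using the factorization above together with Plancherel, the consecutive difference carries the clean symbol $\widehat{\mathcal{M}^N(s)}-\widehat{\mathcal{M}^{N+1}(s)}=(1-\widehat{w})^N\,\widehat{w}\,\widehat{s}$, so
\begin{equation}
\left\|\mathcal{M}^N(s)-\mathcal{M}^{N+1}(s)\right\|_{L^2}^2=\int_\R (1-\widehat{w}(\xi))^{2N}\,\widehat{w}(\xi)^2\,|\widehat{s}(\xi)|^2\,\d\xi .
\end{equation}
I would then observe that, because $0\le(1-\widehat{w})^2\le1$, the integrand is monotonically non-increasing in $N$ at each $\xi$ and tends pointwise to $0$ (it already vanishes where $\widehat{w}=0$ or $\widehat{w}=1$, and where $0<\widehat{w}<1$ one has $|1-\widehat{w}|<1$), while being dominated by $|\widehat{s}(\xi)|^2\in L^1(\R)$ since $s\in L^2(\R)$. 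Dominated convergence then forces $\|\mathcal{M}^N(s)-\mathcal{M}^{N+1}(s)\|_{L^2}\to0$, and the monotonicity upgrades this to: once the norm drops below $\delta$ it stays below, so for any $\delta>0$ a well-defined minimal $N_0$ exists. The main obstacle is not any single computation but the careful bookkeeping around the degenerate frequencies $\widehat{w}=0$ and $\widehat{w}=1$ when checking the dominated-convergence hypotheses, and ensuring the hypothesis $w=h\ast h$ is invoked exactly where it is needed to confine $\widehat{w}$ to $[0,1]$.
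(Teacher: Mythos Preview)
Your proof is correct. Note, however, that the paper does not actually supply its own proof of this theorem: it is quoted verbatim as a known result from \cite{cicone2021numerical}, so there is no in-paper argument to compare against. That said, your approach---diagonalizing $\mathcal{M}$ on the Fourier side, using $w=h\ast h$ to confine $\widehat{w}$ to $[0,1]$, and then applying dominated convergence with the monotonicity of $N\mapsto(1-\widehat{w})^{2N}\widehat{w}^2|\widehat{s}|^2$ to produce the minimal $N_0$---is exactly the mechanism that underlies both Theorem~\ref{thm:theo_1} (also cited without proof here) and the discrete analogue Theorem~\ref{thm:DIF_conv_stop}, and is the standard argument one would expect in the cited source.
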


\begin{theorem}\label{thm:IF_separation2}
Given the signal $s$ defined in \eqref{eq:signal}, assuming that the IF method is selecting accurately the mask length of the filter $w_L$ so that the lowest positive frequency of the $\widehat{w_L}$ function with zero value corresponds to the frequency $1$, and fixed $\delta,\ \eta>0$.

Then there exists $N_2\in\N$ such that $\|\mathcal{M}^N(s)(x)-\cos(2\pi  x)\|_{L^2}<\eta$, $\forall N\geq N_2$.
\end{theorem}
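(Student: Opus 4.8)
The plan is to read off $\mathcal{M}^N(s)$ in the Fourier domain. By Theorem~\ref{thm:IF_inner_conv_stopping}, together with the scaling relation \eqref{eq:fft_w_L} that turns $\widehat{w}$ into $\widehat{w_L}$, the $N$-fold operator acts as
\begin{equation*}
\mathcal{M}^N(s)(x)=\int_{\R}\bigl(1-\widehat{w_L}(\xi)\bigr)^N\,\widehat{s}(\xi)\,e^{2\pi i\xi x}\,\d\xi .
\end{equation*}
First I would write the (distributional) Fourier transform of the two-tone signal \eqref{eq:signal}: it is a sum of four Dirac masses, located at $\xi=\pm 1$ for the HF component and at $\xi=\pm f$ for the LF component, carrying weights $\tfrac12$ and $\tfrac{a}{2}e^{\pm i\phi}$. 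Substituting collapses the integral into four terms, each weighted by $\bigl(1-\widehat{w_L}(\xi)\bigr)^N$ evaluated at the matching frequency.

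The core of the argument is to control these weights. By the mask-length hypothesis the lowest positive zero of $\widehat{w_L}$ lies exactly at frequency $1$, and since $w_L$ is even so is its transform; hence $\widehat{w_L}(\pm 1)=0$ and the two HF terms survive with factor $1^N=1$, recombining into $\cos(2\pi x)$ unchanged at every step. For the LF terms at $\pm f$ with $f\in(0,1)$ I would show $0<\widehat{w_L}(f)<1$: using $w=h\ast h$ and \eqref{eq:fft_w_L} gives $\widehat{w_L}(\xi)=\bigl(\widehat{h}(L\xi)\bigr)^2\ge 0$ with $\widehat{w_L}(0)=1$, so positivity on $(0,1)$ follows from continuity and the absence of any zero below $1$, while $\widehat{w_L}(f)<1$ comes from $|\widehat{h}|<\int_\R h=1$ away from the origin. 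Thus $0<1-\widehat{w_L}(f)<1$ and the LF weight decays geometrically.

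Combining the two facts yields the identity
\begin{equation*}
\mathcal{M}^N(s)(x)-\cos(2\pi x)=a\,\bigl(1-\widehat{w_L}(f)\bigr)^N\cos(2\pi f x+\phi),
\end{equation*}
so the residual is precisely the LF tone attenuated by a factor tending to zero. Its norm equals $a\,C\,\bigl(1-\widehat{w_L}(f)\bigr)^N$ with $C=\|\cos(2\pi f x+\phi)\|_{L^2}$, a geometrically decaying quantity that can be driven below any prescribed $\eta$ by taking $N$ large.

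Finally, I expect the genuine obstacle to be not the algebra but the \emph{function space} in which the $L^2$ norm is read: a pure cosine is not in $L^2(\R)$, so both the Dirac-mass Fourier transforms and the finiteness of $C$ presuppose that the analysis lives on a bounded time window, equivalently in the periodic or power-normalized setting underlying the one-or-two-frequencies framework of \cite{flandrin2007oneOrTwo,wu2011oneOrTwo}. Once this is fixed so that $C<\infty$, the claim follows with $N_2=\max\{N_0,\widetilde{N}\}$, where $N_0=N_0(\delta)$ is the index beyond which Theorem~\ref{thm:IF_inner_conv_stopping} certifies the formula for $\mathcal{M}^N(s)$, and $\widetilde{N}=\bigl\lceil \log(\eta/(aC))\,/\,\log\bigl(1-\widehat{w_L}(f)\bigr)\bigr\rceil$ forces the error below $\eta$ for every $N\ge N_2$.
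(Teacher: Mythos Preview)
Your proposal is correct and follows essentially the same route as the paper: use the Fourier-domain formula \eqref{eq:IMF_IF_stop}, observe that the HF terms at $\xi=\pm1$ pass through unchanged because $\widehat{w_L}(\pm1)=0$, show that the LF weight $(1-\widehat{w_L}(f))^N$ decays geometrically since $0<\widehat{w_L}(f)<1$, and take $N_2=\max(N_0,\widetilde{N})$. Your write-up is in fact more careful than the paper's---you justify $0<\widehat{w_L}(f)<1$ via the $h\ast h$ structure rather than by citation, and you flag the function-space issue (a pure tone is not in $L^2(\R)$) that the paper leaves implicit.
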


\begin{proof}
From \eqref{eq:IMF_IF_stop} it is clear that to obtain a perfect separation we need to have $N\rightarrow\infty$. In fact for any $f\in (0,\ 1)$ and any $N\in\N$, assuming always that the mask length selection is done in the right way so that the lowest positive frequency of the $\widehat{w}$ function with zero value correspond to the frequency $1$, the term $(1-\widehat{w}(\xi))^N \widehat{s}(f)$ it will be always different from zero since $(1-\widehat{w}(\xi))>0$ for any filter produced as convolution of symmetric and nonnegative filters with themselves \cite{cicone2016adaptive}.

It follows from Theorem \ref{thm:IF_inner_conv_stopping} that there exists $N_0\in\N$ such that IF is converging to the HF component contained in the signal \eqref{eq:signal}.
Furthermore, $(1-\widehat{w}(\xi))\rightarrow 0$ as $N\rightarrow\infty$, therefore there exists $N_1\in\N$ such that $\|(1-\widehat{w}(f))^N \widehat{s}(f) e^{2\pi i f x}\|_{L^2}<\eta$, $\forall N\geq N_1$.
Hence for $N_2=\max(N_0,\ N_1)$ it follows the conclusion.
\end{proof}

We point out here that the filter $w$ shape can be chosen in order to make the value of $(1-\widehat{w}(f))^N$ small as we like for a fixed $N\in\N$.

\subsection{The discrete setting}
We consider now the case of discrete signals, which can be either aperiodical or periodical discrete signals supported on $\R$. In the case of periodical signals we can focus on the analysis of a single period and consider it as a vector in $\R^n$, $n\in\N$.

Let us start from the case of aperiodical discrete signals supported on $\R$.

\subsubsection{Aperiodical discrete signals on $\R$}
We assume that the signal \eqref{eq:signal} is sampled at discrete and uniformly spaced points $x_k\in\R$.
For simplicity, and without loosing generality, we assume that
\begin{equation}\label{eq:discrete_signal}
 s(x_k, a, f) = \cos(2\pi x_k)+a \cos(2\pi f x_k+\phi) \qquad \forall k\in\Z \textrm{ and } f \in (\R\backslash\Q)\cap(0, 1),
\end{equation}
where $x_k=k T$, $T\ll 1$ and $k\in\Z$, so that $x$ is sampled with the rate of $\textrm{Fs}=\frac{1}{T}=f_s\gg 1$ samples/seconds which allows to capture all its fine details.

The term $\cos(2\pi x_k)$ is referred to in the following as the high frequency component (HF) and the second term as the low frequency one (LF).

We observe that, since $f$ is irrational, the HF and LF components together form an aperiodical signal $s$.

Assuming $s_1=s$, the main step of the IF method becomes
\begin{equation}\label{eq:s_m+1}
s_{m+1}(x_k) \approx s_{m}(x_k)-\!\!\!\!\! \sum_{x_j=x_k-L_m}^{x_k+L_m}\!\!\!\!\! s_m(x_j)w_m(x_k-x_j)\frac{1}{n}, \quad k\in\Z.
\end{equation}

Given the Continuous Fourier Transform (CFT) $\widehat{s}(\xi)$ of the original signal \eqref{eq:signal}, then the Discrete Time Fourier Transform (DTFT) of the uniformly sampled signal \eqref{eq:discrete_signal} and its inverse are equal to
\begin{eqnarray}
\label{eq:DTFT1} \widehat{s}_{1/T}(\xi) &=& \sum_{h=-\infty}^{\infty} \widehat{s}\left(\xi-\frac{h}{T}\right),\\
\label{eq:DTFT2} s(x_k) &=& T \int_{\frac{1}{T}} \widehat{s}_{1/T}(\xi) e^{i2\pi\xi k T}\d \xi,
\end{eqnarray}
for any $\xi\in\R$ and $k\in\Z$.

The integer $h$ in \eqref{eq:DTFT1} has units of cycles/sample, and $1/T=f_s$ is the sample rate in samples/seconds. So $\widehat{s}_{1/T}(\xi)$ comprises exact copies of $\widehat{s}(\xi)$ that are shifted by multiples of $f_s$ hertz and added together. Assuming we are dealing with a compactly supported filter $w$, for sufficiently large $f_s$ the $h = 0$ term in the DTFT of $w$ can be observed in the region $[-f_s/2, f_s/2]$ with little contribution (aliasing) from the other terms.

We are now ready to state the following.

\begin{proposition}[Convergence of the Iterative Filtering method applied to aperiodical discrete signals on $\R$]\label{pro:DTFT_convergence}
Given the filter function $w(t)$ and an aperiodical discrete signal $s(kT)$, $k\in\Z$. \newline
If $|1- \widehat{w}_{1/T}(\xi)| < 1 $ or $\widehat{w}_{1/T}(\xi)=0$,

Then IF converges and the first IMF equals
\begin{equation}\label{eq:IMF_disc_aper}
\textrm{IMF}_1 = T \int_{\frac{1}{T}} \widehat{s}_{1/T}(\xi) \chi_{\{\widehat{w}_{1/T}(\xi)=0 \}}  e^{i 2\pi \xi k T} \d \xi.
\end{equation}
\end{proposition}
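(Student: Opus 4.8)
The plan is to mirror the proof of the continuous convergence result (Theorem \ref{thm:theo_1}), replacing the Continuous Fourier Transform with the Discrete Time Fourier Transform and invoking the discrete convolution theorem in place of the continuous one. The engine of the argument is that, in the frequency domain, the fluctuation operator $\mathcal{M}$ acts as multiplication by $\bigl(1-\widehat{w}_{1/T}(\xi)\bigr)$, so that iterating it $m$ times multiplies the spectrum by $\bigl(1-\widehat{w}_{1/T}(\xi)\bigr)^m$; the two hypotheses then force pointwise convergence of this factor to the indicator $\chi_{\{\widehat{w}_{1/T}(\xi)=0\}}$.

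First I would rewrite the moving-average step \eqref{eq:s_m+1} as a discrete convolution of the sampled signal $s_m(x_j)$ with the sampled filter $w_m(x_k-x_j)$. Taking the DTFT of both sides and applying the discrete convolution theorem, the convolution becomes the product $\widehat{w}_{1/T}(\xi)\,\widehat{(s_m)}_{1/T}(\xi)$, so that the fluctuation relation \eqref{eq:flactuations} yields $\widehat{(s_{m+1})}_{1/T}(\xi)=\bigl(1-\widehat{w}_{1/T}(\xi)\bigr)\widehat{(s_m)}_{1/T}(\xi)$. By induction on $m$, starting from $s_1=s$, this gives $\widehat{\mathcal{M}^m(s)}_{1/T}(\xi)=\bigl(1-\widehat{w}_{1/T}(\xi)\bigr)^m\,\widehat{s}_{1/T}(\xi)$.

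Next I would take the pointwise limit of the factor $\bigl(1-\widehat{w}_{1/T}(\xi)\bigr)^m$ as $m\to\infty$, splitting into the two cases of the hypothesis. Where $|1-\widehat{w}_{1/T}(\xi)|<1$ the factor tends to $0$, while where $\widehat{w}_{1/T}(\xi)=0$ the factor equals $1$ for every $m$; in both cases the limit is exactly $\chi_{\{\widehat{w}_{1/T}(\xi)=0\}}$. Since $|1-\widehat{w}_{1/T}(\xi)|\le 1$ throughout the frequency band under the stated hypotheses, the sequence $\bigl|(1-\widehat{w}_{1/T}(\xi))^m\,\widehat{s}_{1/T}(\xi)\bigr|$ is dominated by the integrable function $|\widehat{s}_{1/T}(\xi)|$ over one period $\frac{1}{T}$. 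Dominated convergence then lets me exchange the limit with the inverse DTFT \eqref{eq:DTFT2}, producing exactly \eqref{eq:IMF_disc_aper}.

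The main obstacle I anticipate is the rigorous justification of the convolution theorem in this sampled setting. The moving-average formula \eqref{eq:s_m+1} is only an approximate (Riemann-sum) discretization of the continuous convolution, carrying the normalization factor $\frac{1}{n}$ and a truncation to the finite window $[x_k-L_m,x_k+L_m]$; I must argue that the DTFT of this discrete operation is genuinely multiplication by $\widehat{w}_{1/T}(\xi)$ and not by some nearby quantity. This is precisely where the aliasing discussion preceding the statement enters: for sufficiently large sampling rate $f_s$, the $h=0$ term dominates the sum \eqref{eq:DTFT1}, so that $\widehat{w}_{1/T}(\xi)$ faithfully reproduces the continuous $\widehat{w}(\xi)$ on $[-f_s/2,f_s/2]$ and inherits its zero set. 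Controlling these approximation and aliasing errors, and verifying that they do not displace the zero set $\{\widehat{w}_{1/T}(\xi)=0\}$, is the delicate part; the remaining limit-exchange and convergence steps are then routine.
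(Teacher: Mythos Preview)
Your proposal is correct and follows exactly the approach the paper indicates: the paper's own proof is the single sentence ``The proof follows directly from the one of Theorem~\ref{thm:theo_1} and the properties of the DTFT,'' and you have simply written out what that sentence means, replacing the continuous Fourier transform by the DTFT and the continuous convolution theorem by its discrete-time analogue. Your discussion of the approximation in \eqref{eq:s_m+1} and the aliasing caveat is more careful than anything the paper spells out, but it does not depart from the intended line of argument.
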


The proof follows directly from the one of Theorem \ref{thm:theo_1} and the properties of the DTFT.

\begin{corollary}\label{cor:IF_resolution_1}
Given the signal $s$ defined in \eqref{eq:discrete_signal}, given a filter $w$ and assuming its DTFT $\widehat{w}_{1/T}$ contains at least one zero value. If we further assume that the IF method is selecting accurately the linear scaling $p$ of the doubly convolved filter such that its DTFT, $\widehat{w_p}_{1/(pT)}$, has its lowest positive frequency with zero value at frequency~$1$,

Then IF algorithm can resolve exactly $s$ into the two components HF and LF.
\end{corollary}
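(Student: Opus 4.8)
The plan is to transport the argument of Theorem~\ref{thm:IF_separation} from the continuous to the discrete setting, replacing Theorem~\ref{thm:theo_1} by Proposition~\ref{pro:DTFT_convergence} and the CFT by the DTFT. First I would invoke Proposition~\ref{pro:DTFT_convergence}: once the convergence hypothesis $|1-\widehat{w_p}_{1/(pT)}(\xi)|<1$ or $\widehat{w_p}_{1/(pT)}(\xi)=0$ is verified, the inner loop produces exactly the first IMF \eqref{eq:IMF_disc_aper}, i.e. the band-limited reconstruction of $\widehat{s}_{1/T}$ restricted to the zero set of the filter's DTFT. Thus the whole statement reduces to locating the zeros of $\widehat{w_p}_{1/(pT)}$ relative to the only two frequencies present in $s$, namely $1$ (HF) and $f$ (LF).

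Next I would record the discrete counterpart of the scaling identity \eqref{eq:fft_w_L}. In the continuous case $\widehat{w_L}(\xi)=\widehat{w}(L\xi)$, so dilating the support by the factor in \eqref{eq:w_m_linear} relocates every root of the transform by the reciprocal factor; carrying the same computation through the periodic summation \eqref{eq:DTFT1} that defines the DTFT is precisely what justifies writing the scaled object as $\widehat{w_p}_{1/(pT)}$, whose roots are those of $\widehat{w}_{1/T}$ rescaled by $1/p$. By hypothesis $\widehat{w}_{1/T}$ possesses at least one zero, and the scaling $p$ is chosen so that the \emph{lowest positive} root of $\widehat{w_p}_{1/(pT)}$ sits exactly at frequency~$1$. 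Reading \eqref{eq:IMF_disc_aper} with this zero structure gives the separation immediately: the zero at $\xi=1$ lets the characteristic function retain the HF tone, while $f\in(0,1)$ lies strictly below the lowest positive root and is therefore excluded. Moreover, because $w$ is a doubly convolved filter its transform is a square, hence nonnegative, so $\widehat{w_p}_{1/(pT)}(f)>0$ is bounded above by its value at the origin, and the convergence condition $|1-\widehat{w_p}_{1/(pT)}(f)|<1$ holds at the LF frequency; under iteration the LF contribution is then driven to zero. Hence $\textrm{IMF}_1$ recovers exactly $\cos(2\pi x_k)$ and the remainder is the pure LF tone.

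The step I expect to be the main obstacle is the faithful transfer of the continuous zero structure through the periodization \eqref{eq:DTFT1}. Unlike the CFT, $\widehat{w_p}_{1/(pT)}$ is a superposition of copies of the continuous transform shifted by multiples of $f_s=1/T$, so the clean ``isolated zeros of a compactly supported filter'' argument used in the proof of Theorem~\ref{thm:IF_separation} is not automatically available, and one must ensure that the aliasing neither destroys the zero placed at $\xi=1$ nor manufactures a spurious root inside $(0,1)$ that could collide with $f$. I would control this exactly as anticipated after \eqref{eq:DTFT2}: the compact support of $w$ together with a sufficiently high sampling rate $f_s\gg1$ makes the $h\neq 0$ terms negligible in the band $[-f_s/2,f_s/2]$, so the relevant zero near frequency~$1$ is preserved and stays isolated, while the irrationality of $f$ guaranteed in \eqref{eq:discrete_signal} keeps the LF frequency bounded away from the root set. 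This aliasing control is the only genuinely new ingredient compared with the continuous proof.
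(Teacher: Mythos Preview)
Your argument is correct and matches the paper's own proof: invoke Proposition~\ref{pro:DTFT_convergence}, then read off from \eqref{eq:IMF_disc_aper} that the zero of $\widehat{w_p}_{1/(pT)}$ at frequency~$1$ retains the HF tone while $f\in(0,1)$, lying strictly below the lowest positive root, is filtered out. The one place you over-work is the aliasing paragraph: the corollary \emph{assumes} as a hypothesis that the scaled DTFT has its lowest positive zero at~$1$, so there is nothing to transfer or control inside the proof---the question of whether such a filter can actually be built is handled separately by the paper in Theorem~\ref{thm:FilterDTFT}, not here.
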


\begin{proof}
From the hypotheses it follows that the filter used in IF algorithm will be $w_p$, whose DTFT, $\widehat{w_p}_{1/(pT)}$, has its lowest positive frequency with zero value at frequency~$1$. Therefore, from Proposition \ref{pro:DTFT_convergence} it follows that the first IMF produced by IF will contain all and only the HF component of the given signal $s$, no matter how close the frequency $f$ of the LF component is to 1.
\end{proof}

From an intuitive point of view it may seem pretty hard to guarantee that the assumptions of this Corollary hold true. In particular, having a filter $w(x)$ which has at least one zero in its DTFT is clearly not common in general. In fact, the DTFT of $w$ is, as suggested in \eqref{eq:DTFT1}, the summation of infinitely many repetitions of a properly shifted CFT of the continuous version of $w$. In order to have a zero in $w$ DTFT, we need the $w$ CFT zero frequencies positions to align in at least in one frequency position when we compute the DTFT. However it could be tricky to build such a filter. Furthermore, at every scaling of the filter, this alignment of the zeros may be lost.

The following theorem provides an easy way to construct a filter which is guaranteed to have an actual zero in the DTFT domain.

\begin{theorem}\label{thm:FilterDTFT}
Given a compactly supported filter $h_p(kT)$, $k\in\Z$, assuming the smallest positive minimum in its DTFT is at frequency $f_1$ and has value $\varepsilon$,

Then the function
\begin{equation}\label{eq:newFilter}
w_p=\iDTFT\left(\left(\DTFT(h_p)-\varepsilon\right)^2\right)
\end{equation}
is a doubly convolved real filter with zero at frequency $f_1$ in the DTFT domain.
\end{theorem}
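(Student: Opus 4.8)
The plan is to verify the three properties asserted by the statement — that $w_p$ has a zero at $f_1$, that it is real, and that it is a \emph{doubly convolved} filter — by working entirely on the frequency side and then translating back through the inverse DTFT.

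First I would fix notation by writing $H(\xi) = \DTFT(h_p)(\xi)$. Because $h_p$ is a filter/window in the sense of Definition \ref{def:window}, it is real, even and compactly supported, so its transform $H$ is a real, even function of $\xi$. By hypothesis the smallest positive minimum of $H$ sits at $\xi = f_1$ with $H(f_1) = \varepsilon$. Setting $G(\xi) = (H(\xi) - \varepsilon)^2$, the zero property is immediate: $G(f_1) = (\varepsilon - \varepsilon)^2 = 0$, so $w_p = \iDTFT(G)$ vanishes in the DTFT domain exactly at $f_1$, as claimed. Moreover, since $f_1$ is the \emph{smallest} positive minimum, $H(\xi) > \varepsilon$ for $0 < \xi < f_1$, so $G$ is strictly positive there and $f_1$ is indeed the lowest positive zero.

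Next I would address realness and the self-convolution structure together, since both follow from the convolution theorem. As $H$ is real and $\varepsilon$ is a real constant, $G$ is a nonnegative, real, even function of $\xi$; hence its inverse DTFT $w_p$ is real (and even). For the doubly convolved claim, I would factor $G = (H - \varepsilon)\cdot(H - \varepsilon)$ and invoke the convolution theorem, which turns the inverse DTFT of a product into the discrete convolution of the inverse DTFTs. Writing $g = \iDTFT(H - \varepsilon)$, this yields $w_p = g * g$, exhibiting $w_p$ as the convolution of a function with itself, i.e. a doubly convolved filter.

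The one point requiring care — and the step I expect to be the main obstacle — is identifying $g$ explicitly and confirming it is a genuine real, even, compactly supported kernel. Here $\iDTFT(H) = h_p$, while the inverse DTFT of the constant $\varepsilon$ is $\varepsilon$ times the discrete unit impulse at the origin; thus $g(kT) = h_p(kT) - \varepsilon\,\delta_{k,0}$. I would then verify that $g$ inherits realness, evenness and compact support from $h_p$, so that $w_p = g * g$ is genuinely a doubly convolved real filter. The consequence that matters for the use inside IF — namely $\widehat{w_p} = G \geq 0$, which guarantees $|1 - \widehat{w_p}| \leq 1$ with equality precisely at the engineered zero $f_1$ — then follows for free from $G$ being a perfect square.
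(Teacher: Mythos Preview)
Your argument is correct and follows essentially the same route as the paper: both identify $\iDTFT(\varepsilon)$ as $\varepsilon$ times the discrete delta at the origin, invoke the convolution theorem to write $w_p=g\ast g$ with $g=h_p-\varepsilon\delta$, and read off the zero at $f_1$ directly from $G(f_1)=0$. Your version is simply more explicit about why $g$ is real, even, and compactly supported, and about $f_1$ being the lowest positive zero.
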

\begin{proof}
First of all, we observe that from \eqref{eq:DTFT2} it follows that $T \int_{\frac{1}{T}} \varepsilon e^{i2\pi\xi k T}\d \xi = T \varepsilon \delta[kT]$, where $\delta(kT)$ is a Dirac delta function in discrete time for $k\in\Z$.

Hence, by the DTFT properties, from \eqref{eq:newFilter} it follows that $w_p$ is a filter doubly convolved with itself. In particular, $w_p$ is even, nonnegative and compactly supported function such that $\sum_{k=-\infty}^\infty w_p(kT)=1$.
Furthermore, by construction, $w_p$ has a zero at frequency $f_1$.
\end{proof}

\subsubsection{Periodical discrete signals on $\R$}

We consider now the case of vectorial signals defined in $\R^p$, $p\in\N$.

\begin{equation}\label{eq:discrete_signal_Rn}
   s(x_k, a, f) = \cos(2\pi x_k)+a \cos(2\pi f x_k+\phi), \qquad \forall k=1,\ \ldots,\ \frac{n}{T}=p \textrm{ and } f \in \left(\frac{1}{n}, 1\right),
\end{equation}
where $n$ and $\frac{1}{T}\in\N$, $x_k=k T$, $T\ll 1$, so that $x$ is sampled with the rate of $\textrm{Fs}=\frac{1}{T}\gg 1$ samples/sec which allows to capture all its fine details.

We want to decompose the vector $\mathbf{s}=\left[s(x_k)\right]_{k=0}^{\frac{n}{T}-1}$ into two vectorial IMFs. Without loosing generality we can assume that $\|\mathbf{s}\|_2=1$.

We assume that a filter function $w$ has been selected a priori, for instance one of the Fokker-Planck filters \cite{cicone2016adaptive} convolved with itself, and that $w_m$ is computed by linear scaling, as described in \eqref{eq:w_m_linear}, such that its support becomes $2 L_m+1$, $L_m\in\N$.

Assuming $s_1=s$, the main step of the IF method becomes
\begin{equation}\label{eq:s_m+1_2}
s_{m+1}(x_i) \approx s_{m}(x_i)-\!\!\!\!\! \sum_{x_j=x_i-L_m}^{x_i+L_m}\!\!\!\!\! s_m(x_j)w_m(x_i-x_j)\frac{1}{n}, \quad i=0,\ldots,\frac{n}{T}-1,
\end{equation}
where $x$ is sampled at a rate which allows to capture all the fine details of $s$, so that aliasing will not play any role. In particular, given the sampling rate of $\textrm{Fs}=\frac{1}{T}$, from Shannon--Hartley theorem \cite{flandrin1998} we know that the highest frequency that can be properly captured is $\frac{1}{2T}$ Hz. Whereas, given that the signal is assumed to be of length $n$ seconds, the lowest frequency that can be fully represented equals $\frac{1}{n}$ Hz.

Algorithm \ref{algo:IF_discrete} provides the discrete version of IF Algorithm \ref{algo:IF}

\begin{algorithm}
\caption{\textbf{Discrete Iterative Filtering} IMF = DIF$(s)$}\label{algo:IF_discrete}
\begin{algorithmic}
\STATE IMF = $\left\{\right\}$
\WHILE{the number of extrema of $s$ $\geq 2$}
\STATE $s_1 = s$
\WHILE{the stopping criterion is not satisfied}
\STATE compute the function $w_m(\xi)$, whose half support length $L_m$ is based on the signal $\left[s_m(x_i)\right]_{i=0}^{\frac{n}{T}-1}$
\STATE  $s_{m+1}(x_i) = s_{m}(x_i) - \sum_{j=0}^{p-1} s_m(x_j)w_m(|x_i-x_j|) \frac{1}{p},\qquad i= 0,\ldots, p-1$
\STATE  $m = m+1$
\ENDWHILE
\STATE IMF = IMF$\,\cup\,  \{ s_{m}\}$
\STATE $s=s-s_{m}$
\ENDWHILE
\STATE IMF = IMF$\,\cup\,  \{ s\}$
\end{algorithmic}
\end{algorithm}

In matrix form we have
\begin{equation}\label{eq:MatrixForm}
    s_{m+1}=(I-W_m)s_m
\end{equation}
where
\begin{equation}\label{eq:K}
    W_m=\left[w_m(x_i-x_j)\cdot \frac{T}{n}\right]_{i,\ j=0}^{\frac{n}{T}-1}=\left[\frac{w\left(\frac{x_i-x_j}{L_m}\right)}{L_m}\cdot \frac{T}{n}\right]_{i,\ j=0}^{\frac{n}{T}-1}=\left[\frac{w\left(\frac{i-j}{(n-1)L_m}\right)}{L_m}\cdot \frac{T}{n}\right]_{i,\ j=0}^{\frac{n}{T}-1}.
\end{equation}

The first IMF is given by $\textrm{IMF}_1=\lim_{m\rightarrow\infty} (I-W_m)s_m$.

We point out that the matrix $W_m$ depends on the half support length $L_m$ at every step $m$. However, in the implemented code the value $L_m$ is usually computed only in the first iteration of each inner while loop and then kept constant to $L_m=L_1=L$ value in the subsequent steps, so that the matrix $W_m$ is equal to $W_1$ for every $m\in\N$. Thus, the first IMF is given by

\begin{equation}\label{eq:First_IMF_fixed_length}
\textrm{IMF}_1=\lim_{m\rightarrow\infty} (I-W_1)^{m} s.
\end{equation}

\begin{theorem}[Convergence of the Discrete Iterative Filtering \cite{cicone2021numerical}]\label{thm:ExplicitFormulaDiscreteIMF}
Given a signal $s\in\R^p$, assuming that we are considering a doubly convolved filter $w_L$ whose half filter support length $L_m$ is constant and equal to $L$ throughout all the steps of an inner loop, assuming that $L$ is big enough so that the convolution matrix $W_1$ associated with the scaled filter $w_L$ is different from an identity matrix $I$, assuming that $\left\{\lambda_j\right\}_{j=0,\ldots,p-1}$ are the eigenvalues of $W$ such that $k$ of them, $k\in\{0,\ 1,\ldots,\ p-1\}$, are equal to zero,

Then $W_1$ is diagonalizable as $W_1=U D U^T$, where $U$ is a unitary matrix having as columns the eigenvectors $u_p$ of $W_1$, and the first outer loop step of the DIF method converges to
\begin{equation}\label{eq:discreteIMF}
\textrm{IMF}_1=\lim_{m\rightarrow \infty}(I-W_1)^m s=U Z U^Ts
\end{equation}
where  $Z=I-D$ is a diagonal matrix with entries all zero except $k$ elements in the diagonal which are equal to one.
\end{theorem}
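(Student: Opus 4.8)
The plan is to collapse the matrix power in \eqref{eq:First_IMF_fixed_length} onto independent scalar sequences by diagonalizing $W_1$, and then to locate the spectrum so that each scalar sequence converges. First I would establish the orthogonal diagonalization. Because the fixed filter shape is even (Definition \ref{def:window}) and the entries of $W_1$ in \eqref{eq:K} depend on $w\!\left((i-j)/((n-1)L)\right)$, we have $(W_1)_{ij}=(W_1)_{ji}$, so $W_1$ is real symmetric (indeed circulant in the periodic setting). The spectral theorem then furnishes an orthogonal $U$, with $U^TU=UU^T=I$, whose columns are the eigenvectors, and a real diagonal $D=\mathrm{diag}(\lambda_0,\dots,\lambda_{p-1})$ such that $W_1=UDU^T$; this is exactly the first assertion. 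Orthogonality now lets me telescope the powers: from $I-W_1=U(I-D)U^T$ the inner factors cancel and $(I-W_1)^m=U(I-D)^mU^T$, hence $(I-W_1)^m s=U(I-D)^mU^Ts$. Since $(I-D)^m$ is diagonal with entries $(1-\lambda_j)^m$, convergence of the whole sequence reduces to convergence of each scalar $(1-\lambda_j)^m$.

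The crux, and the step I expect to be the main obstacle, is locating the eigenvalues, i.e. proving the discrete counterpart of the hypothesis of Theorem \ref{thm:theo_1}: each $\lambda_j$ is either $0$ or satisfies $|1-\lambda_j|<1$. I would argue in two moves. Since the filter is doubly convolved, $w=h\ast h$, the circulant eigenvalues are samples of $\widehat{w}=(\widehat{h})^2\ge 0$ (with $\widehat{h}$ real because $h$ is even), so every $\lambda_j\ge 0$. And the normalization $\int w=1$ of Definition \ref{def:window}, inherited by the discretization, makes $W_1$ nonnegative with the all-ones vector an eigenvector of eigenvalue $1$; by Perron--Frobenius this is the spectral radius, so $0\le\lambda_j\le 1$, whence $1-\lambda_j\in[0,1]$ with $1-\lambda_j=1$ precisely when $\lambda_j=0$. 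The hypothesis $W_1\neq I$ only serves to rule out the degenerate case in which every $\lambda_j=1$.

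Finally I would conclude. On the $k$ positions with $\lambda_j=0$ one has $(1-\lambda_j)^m=1$ for every $m$, while on the remaining positions $0\le 1-\lambda_j<1$ forces $(1-\lambda_j)^m\to 0$. Hence $(I-D)^m$ converges to the diagonal matrix $Z$ carrying $1$ in exactly the $k$ zero-eigenvalue slots and $0$ elsewhere, and therefore $\lim_{m\to\infty}(I-W_1)^m s=UZU^Ts$, which is \eqref{eq:discreteIMF}. This limiting $Z$ is precisely the orthogonal projection onto the kernel of $W_1$, the discrete analogue of the indicator $\chi_{\{\widehat{w}(\xi)=0\}}$ appearing in \eqref{eq:IMF_cont}, so the surviving component is spanned exactly by the eigenvectors associated with the vanishing eigenvalues.
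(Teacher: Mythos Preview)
Your argument is correct and follows the same overall arc as the proof the paper imports from \cite{cicone2021numerical}: diagonalize $W_1$, pin the spectrum inside $[0,1]$, and read off the limit of $(I-D)^m$ entrywise. The difference is one of emphasis. The paper (and the cited reference) exploits the \emph{circulant} structure directly: the eigenvectors are the Fourier vectors \eqref{eq:Eigenvectors} and the eigenvalues are the DFT samples \eqref{eq:Lambdas} of the discrete filter, so the bound $|\lambda_j|=\bigl|\sum_q w_L(q)e^{-2\pi ijq/p}\bigr|\le\sum_q w_L(q)=1$ and the nonnegativity $\lambda_j=\bigl(\mathrm{DFT}(h)_j\bigr)^2\ge 0$ are immediate. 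You instead invoke only symmetry and the spectral theorem, which is slightly more general (it survives non-periodic boundary extensions) but less explicit---in particular it does not identify $U$ with the DFT matrix, which is precisely what drives the FIF fast implementation mentioned right after the theorem.

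One small quibble: Perron--Frobenius is not really the tool that delivers $\lambda_j\le 1$. Knowing that $\mathbf{1}$ is an eigenvector for the eigenvalue $1$ does not, by itself, force $1$ to be the spectral radius. What you actually need is that $W_1$ is row-stochastic (nonnegative entries summing to $1$ along each row), whence $\|W_1\|_\infty=1$ bounds the spectral radius directly; your conclusion $\lambda_j\in[0,1]$ then stands, and the rest of your proof goes through unchanged.
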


We recall here that the eigenvalues of the $W_1$ circulant matrix are given by
\begin{equation}\label{eq:Lambdas}
\lambda_j\ =\ \sum_{q=0}^{p-1} w_L\left(\frac{q}{p-1}\right)e^{-2\pi i j \frac{q}{p}}  \qquad  \qquad j\ =\ 0,\ldots,\ p-1,
\end{equation}
which is equivalent to the Discrete Fourier Transform (DFT) of $w_L$ at frequencies $j\ =\ 0,\ldots,\ p-1$.
The corresponding eigenvectors are
\begin{equation}\label{eq:Eigenvectors}
u_j\ =\ \frac{1}{\sqrt{p}} \left[1,\ e^{-2\pi i j\frac{1}{p}},\ldots,\ e^{-2\pi i j\frac{p-1}{p}}\right]^T, \qquad  \qquad j\ =\ 0,\ldots,\ p-1,
\end{equation}
which form a Fourier basis \cite{cicone2021numerical}. These observations led to a fast implementation of the IF method, the so called Fast Iterative Filtering (FIF) \cite{cicone2021numerical,cicone2019Direct}.

Regarding the filter length $2L+1$, i.e. the filter support length, measured in sample points, we assume in this work that it can only achieve integer values. We recall that it is always possible to go beyond this limit by numerical approximation as described in \cite{cicone2016adaptive} Section 4.
We are now ready to prove the following
\begin{corollary}\label{cor:DIF_resolution}
Given the signal $s\in\R^p$ defined in \eqref{eq:discrete_signal_Rn}, sampled at $\textrm{Fs}=\frac{1}{T}$ sampling rate such that $p=n\frac{1}{T}$, assuming $w_L$ is a doubly convolved filter, whose filter length, measured in sample points, equals $2L+1$, such that the smallest positive zero in the DFT of $w_L$ corresponds to frequency $1$.

Then DIF algorithm can always resolve $s$ into the two components HF and LF as far as the LF component has a frequency $f \leq 1-\frac{1}{n}$.
\end{corollary}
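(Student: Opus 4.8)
The plan is to reduce everything to the explicit spectral formula \eqref{eq:discreteIMF} of Theorem \ref{thm:ExplicitFormulaDiscreteIMF}, namely $\textrm{IMF}_1 = U Z U^T s$, where $U$ is the Fourier basis of eigenvectors \eqref{eq:Eigenvectors} of the circulant matrix $W_1$ and $Z$ is the $0/1$ diagonal matrix carrying a $1$ exactly in those positions $j$ for which the eigenvalue $\lambda_j$ in \eqref{eq:Lambdas} vanishes. Since the $\lambda_j$ are precisely the values of the DFT of $w_L$, the whole statement becomes a question about where this DFT is zero relative to the two spectral lines of $s$ defined in \eqref{eq:discrete_signal_Rn}.

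First I would fix the dictionary between physical frequency and DFT index. The record has length $n$ seconds and $p=n/T$ samples, so the $j$-th DFT bin corresponds to the physical frequency $j/n$ Hz and the frequency resolution is $1/n$. Hence the HF term $\cos(2\pi x_k)$, at $1$ Hz, sits in bin $j=n$ (together with its conjugate in bin $p-n$), while the LF term $a\cos(2\pi f x_k+\phi)$, at $f$ Hz, sits in bin $j=nf$ (and its conjugate $p-nf$). Writing $U^T s$ therefore produces a vector supported only on the four indices $\{n,\, p-n,\, nf,\, p-nf\}$.

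Next I would use the filter hypothesis together with the nonnegativity of a doubly convolved spectrum. Because $w_L$ is obtained as the convolution of a symmetric nonnegative filter with itself, its DFT is real, even (so that $\lambda_{p-j}=\lambda_j$) and nonnegative; the assumption that the smallest positive zero occurs at frequency $1$ then forces $\lambda_n=0$ while $\lambda_j>0$ for all $1\le j\le n-1$, and by the even symmetry also $\lambda_{p-n}=0$ with $\lambda_{p-j}>0$ for $1\le j\le n-1$. Consequently the two HF indices $n$ and $p-n$ carry a $1$ in $Z$, so the HF modes pass untouched through $U Z U^T$, whereas any index strictly between $0$ and $n$ (and symmetrically near $p$) carries a $0$ and is annihilated.

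Finally I would invoke the bound $f\le 1-\frac{1}{n}$. It gives $nf\le n-1<n$, so both LF indices $nf$ and $p-nf$ fall in the strictly positive region of the spectrum and receive a $0$ from $Z$; the LF component is thus entirely removed from $\textrm{IMF}_1$ and carried into the remainder $s-\textrm{IMF}_1$. Collecting the two observations, $\textrm{IMF}_1$ equals exactly the HF component and $s-\textrm{IMF}_1$ equals exactly the LF component, which is the claimed exact resolution. The main obstacle I anticipate is the bookkeeping at the discrete level: one must ensure that $nf$ is an admissible integer bin on the $1/n$ grid, that no spurious zero of the DFT of $w_L$ hides in $\{1,\dots,n-1\}$ --- the latter guaranteed precisely by combining the ``smallest positive zero at frequency $1$'' hypothesis with the nonnegativity of the doubly convolved spectrum --- and that the conjugate (negative-frequency and aliasing) bins are handled consistently. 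The threshold $1-\frac{1}{n}$ is exactly the point at which the LF line is still separated from the HF line by one full DFT bin, i.e. the intrinsic resolution limit $\frac{1}{n}$ of a length-$n$ record.
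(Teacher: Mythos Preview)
Your proposal is correct and follows essentially the same approach as the paper: invoke the spectral formula of Theorem~\ref{thm:ExplicitFormulaDiscreteIMF} together with the DFT frequency resolution $Fs/p=1/n$, so that the HF line lands on the zero eigenvalue at bin $n$ while any LF line with $f\le 1-\tfrac{1}{n}$ lands on a strictly positive eigenvalue and is annihilated in the limit. The paper's own proof is a one-line remark to this effect; you have simply (and correctly) spelled out the bin indexing, the evenness/nonnegativity of the doubly convolved spectrum, and the caveat that exact separation needs $nf$ to be an integer bin---a point the paper also flags immediately after the corollary.
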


The proof follows directly from Theorem \ref{thm:ExplicitFormulaDiscreteIMF} and the properties of the DFT. In particular, the frequency resolution of the DFT is given by the ratio $\frac{Fs}{p}=\frac{1}{T}\frac{T}{n}=\frac{1}{n}$, which represents the reciprocal of the number of seconds contained in the signal $s$.

We point out that the very same result holds true also for the FIF implementation of DIF method proposed in \cite{cicone2021numerical,cicone2019Direct}.

Another important observation regards the ability of DIF and FIF methods to exactly separate the HF from the LF components. Clearly there are two scenarios: the frequency $f$ of the LF component belongs to the set $\left\{\ \frac{1}{n},\ \frac{2}{n},\ldots,\ 1-\frac{1}{n}\right\}$, i.e. can be exactly represented by the DFT in the Fourier domain, or it does not. In the first case the method can separate exactly the two components. In the second case the algorithm can still identify two components and separate the two with a certain degree of accuracy.

This second scenario includes the case of frequencies $f$ not being rational numbers. Clearly we cannot have a perfect separation, but the algorithm is still able to address the fundamental question ``one or two frequencies?''.

\subsection{The discrete setting with a stopping criterion}

In the implemented algorithm we do not let $m$ to go to infinity. Instead, we use some kind of stopping criterion and discontinue the calculations when a pre-fixed threshold value has been reached \cite{cicone2016adaptive}.

We recall the following known theorem.

\begin{theorem}[Convergence of the Discrete Iterative Filtering with a Stopping Criterion \cite{cicone2021numerical}]\label{thm:DIF_conv_stop}

Given $s\in\R^p$, we consider the convolution matrix $W$, associated with a filter vector $w$ given as a symmetric filter $h$ convolved with itself. Assuming that $W$ has $k$ zero eigenvalues, where $k$ is a number in the set $\in\{0,\ 1,\ldots,\ p-1\}$, and fixed $\delta>0$.

Then, calling $\widetilde{s}=U^T s$, for the minimum $N_0\in\N$ such that it holds true the inequality

\begin{equation*}\label{eq:N0_discrete}
\frac{N_0^{N_0}}{\left(N_0+1\right)^{N_0+1}}<\frac{\delta}{\|\widetilde{s}\|_\infty{\sqrt{p-1-k}}}
\end{equation*}

we have that $\left\| s_{m+1}-s_m\right\|_{2}<\delta \quad \forall m\geq N_0$ and the first IMF is given by
\begin{equation}\label{eq:IMF1_direct}
\overline{\textrm{IMF}}_1=U(I-D)^{N_0} U^T s= U P \left[
\begin{array}{ccccccc}
0 &   &   &   &   &   &   \\
&  (1-\lambda_1 )^{N_0} &   &   &   &   &   \\
&   & \ddots  &   &   &   &   \\
&   &   &  (1-\lambda_{p-1-k} )^{N_0} &   &   &   \\
&   &   &   &  1 &   &   \\
&   &   &   &   &  \ddots &   \\
&   &   &   &   &   & 1  \\
\end{array}
\right] P^T U^T s,
\end{equation}
where $P$ is a permutation matrix which allows to reorder the columns of $U$, which correspond to eigenvectors of $W$, so that the corresponding eigenvalues $\{\lambda_j\}_{j=1,\ldots,\ p-1}$ are in decreasing order.

\end{theorem}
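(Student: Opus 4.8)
The plan is to diagonalize the inner-loop iteration and thereby reduce the whole statement to a single-variable extremal problem for the scalar map $\lambda\mapsto(1-\lambda)^m\lambda$. First I would recall from \eqref{eq:MatrixForm} and \eqref{eq:First_IMF_fixed_length} that the iteration with fixed filter length reads $s_m=(I-W)^m s$, so that the increment factors as
\[
s_{m+1}-s_m=(I-W)^m\big[(I-W)-I\big]s=-(I-W)^m W s .
\]
Since $w$ is a symmetric filter convolved with itself, $W$ is a real symmetric circulant matrix and, by Theorem \ref{thm:ExplicitFormulaDiscreteIMF}, it is diagonalized by the Fourier basis $U$ as $W=UDU^T$, with eigenvalues $\lambda_j$ given by the DFT of $w$ in \eqref{eq:Lambdas}. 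Because $w=h\ast h$, these eigenvalues are squares of the (real) DFT of $h$, hence real and nonnegative, and the normalization $\sum_k w(kT)=1$ forces the DC eigenvalue $\lambda_0=1$ and $0\le\lambda_j\le 1$ for every $j$.

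Next, using that $U$ is unitary and writing $\widetilde{s}=U^T s$ as in the statement, I would pass to the eigenbasis to get
\[
\|s_{m+1}-s_m\|_2^2=\big\|(I-D)^m D\,\widetilde{s}\big\|_2^2=\sum_{j=0}^{p-1}(1-\lambda_j)^{2m}\lambda_j^2\,|\widetilde{s}_j|^2 .
\]
Here the $k$ eigenvalues equal to $0$ contribute nothing through the factor $\lambda_j^2$, while the eigenvalue $\lambda_0=1$ contributes nothing for $m\ge 1$ through the factor $(1-\lambda_0)^{2m}$; hence only the $p-1-k$ indices with $0<\lambda_j<1$ survive, which is exactly where the count $p-1-k$ in the hypothesis originates.

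The heart of the argument, and the step I expect to be the main obstacle, is the uniform scalar estimate. I would study $g_m(\lambda)=(1-\lambda)^m\lambda$ on $(0,1)$ and solve $g_m'(\lambda)=(1-\lambda)^{m-1}\big[1-(m+1)\lambda\big]=0$, obtaining the maximizer $\lambda^\ast=\tfrac{1}{m+1}$ and the maximal value $g_m(\lambda^\ast)=\frac{m^m}{(m+1)^{m+1}}$. Substituting this bound and estimating the surviving coefficients by $\sum|\widetilde{s}_j|^2\le(p-1-k)\|\widetilde{s}\|_\infty^2$ yields
\[
\|s_{m+1}-s_m\|_2\le\frac{m^m}{(m+1)^{m+1}}\sqrt{p-1-k}\,\|\widetilde{s}\|_\infty .
\]
To turn the hypothesis at $N_0$ into a statement valid for all $m\ge N_0$, I would verify that $a_m=\frac{m^m}{(m+1)^{m+1}}=\frac{1}{m+1}\big(1-\tfrac{1}{m+1}\big)^m$ is monotonically decreasing (the first factor decreases to $0$ and the second decreases to $1/e$). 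Consequently the defining inequality of $N_0$ gives $a_m\le a_{N_0}<\delta/(\|\widetilde{s}\|_\infty\sqrt{p-1-k})$ for every $m\ge N_0$, whence $\|s_{m+1}-s_m\|_2<\delta$ for all such $m$, which is precisely the stopping criterion.

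Finally, for the closed form of the IMF I would simply read off $\overline{\textrm{IMF}}_1=(I-W)^{N_0}s=U(I-D)^{N_0}U^Ts$, whose diagonal entries are $(1-\lambda_j)^{N_0}$: these equal $0$ at the eigenvalue $\lambda=1$, equal $1$ at each of the $k$ zero eigenvalues, and lie in $(0,1)$ otherwise. Inserting the permutation $P$ that orders $\{\lambda_j\}$ decreasingly places the single $0$ at the top, the $k$ ones at the bottom, and the intermediate factors $(1-\lambda_j)^{N_0}$ in between, which is exactly the block structure displayed in \eqref{eq:IMF1_direct}.
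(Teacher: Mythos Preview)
The paper does not actually prove this theorem; it is recalled verbatim from \cite{cicone2021numerical} with no proof given here. Your argument is correct and is the standard route: diagonalize the circulant iteration via the Fourier eigenbasis, reduce the increment $\|s_{m+1}-s_m\|_2$ to the scalar extremal problem for $(1-\lambda)^m\lambda$ on $[0,1]$, read off the maximum $m^m/(m+1)^{m+1}$ at $\lambda=1/(m+1)$, and use the monotone decrease of this quantity to pass from $N_0$ to all $m\ge N_0$. This is precisely the approach used in the cited reference, so there is nothing materially different to compare.
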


From Theorem \ref{thm:DIF_conv_stop} we deduce the following result.

\begin{corollary}\label{cor:DIF_Stopping_resolution}
	Given the signal $s\in\R^p$ defined in \eqref{eq:discrete_signal_Rn}, sampled at $\textrm{Fs}=\frac{1}{T}$ sampling rate such that $p=n\frac{1}{T}$, assuming the LF component has a frequency $f$ belonging to the set $\left\{\ \frac{1}{n},\ \frac{2}{n},\ldots,\ 1-\frac{1}{n}\right\}$. Assuming also that $w_L$ is a doubly convolved filter, whose filter length, measured in sample points, equals $2L+1$, such that the smallest positive zero in the DFT of $w_L$ corresponds to frequency $1$, and fixed $\eta > 0$ and the stopping criterion $\delta > 0$,
	
	Then, there exists an $\widetilde{N_0}>0$ such that the stopping criterion is satisfied, and $\overline{\textrm{IMF}}_1=U(I-D)^{\widetilde{N_0}} U^T s$ and the ground truth HF component differ in norm less than $\eta$.
\end{corollary}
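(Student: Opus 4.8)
The plan is to diagonalize the whole outer-loop operator in the Fourier eigenbasis of the circulant matrix $W$ and then to follow the two tones separately. First I would split the sampled signal as $\mathbf{s}=\mathbf{s}_{\textrm{HF}}+\mathbf{s}_{\textrm{LF}}$, with $\mathbf{s}_{\textrm{HF}}=\left[\cos(2\pi x_k)\right]_k$ the ground-truth HF component and $\mathbf{s}_{\textrm{LF}}=\left[a\cos(2\pi f x_k+\phi)\right]_k$. Because both the HF frequency $1$ and the LF frequency $f\in\{1/n,\dots,1-1/n\}$ lie exactly on the DFT grid (whose resolution is $1/n$), the coordinate vector $U^T\mathbf{s}$ is supported on only the four Fourier modes associated with the frequencies $\pm 1$ and $\pm f$. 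By \eqref{eq:Lambdas} the eigenvalue carried by each mode is exactly the DFT value of $w_L$ at that frequency, so the two HF modes sit on the eigenvalue $\lambda_{\textrm{HF}}=0$ (the assumed smallest positive zero of the DFT of $w_L$ at frequency $1$), while the two LF modes share a single eigenvalue $\lambda_f$.

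Next I would pin down the range of $\lambda_f$, which is the one place where the hypotheses are genuinely used. Since $w_L$ is doubly convolved, its DFT is the square of the (real) DFT of the generating filter and is therefore nonnegative, so $\lambda_f\ge 0$; since $w_L$ is nonnegative and normalized, its DFT is bounded by its DC value with strict inequality away from the DC mode (the phases $e^{-2\pi i f x_k}$ cannot all align over a support of more than one sample), so $\lambda_f<1$; and since $1$ is the \emph{smallest} positive zero while $f\le 1-1/n<1$, we cannot have $\lambda_f=0$. Hence $0<\lambda_f<1$, equivalently $0<1-\lambda_f<1$. Applying Theorem \ref{thm:DIF_conv_stop} for the prescribed $\delta$ gives a minimal $N_0$ such that the stopping criterion holds and $\overline{\textrm{IMF}}_1=U(I-D)^{N}U^T\mathbf{s}$ for every $N\ge N_0$; acting with this operator mode by mode leaves the HF modes multiplied by $(1-\lambda_{\textrm{HF}})^{N}=1$ and scales the LF modes by $(1-\lambda_f)^{N}$, so that
\[
\overline{\textrm{IMF}}_1-\mathbf{s}_{\textrm{HF}}=(1-\lambda_f)^{N}\,\mathbf{s}_{\textrm{LF}},
\qquad
\bigl\|\overline{\textrm{IMF}}_1-\mathbf{s}_{\textrm{HF}}\bigr\|_2=(1-\lambda_f)^{N}\,\|\mathbf{s}_{\textrm{LF}}\|_2 .
\]

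Finally I would choose the stopping index. Because $0<1-\lambda_f<1$ the right-hand side decays geometrically, so there is an $N_1$ (any integer exceeding $\log\!\left(\eta/\|\mathbf{s}_{\textrm{LF}}\|_2\right)/\log(1-\lambda_f)$) with $(1-\lambda_f)^{N}\|\mathbf{s}_{\textrm{LF}}\|_2<\eta$ for all $N\ge N_1$. Setting $\widetilde{N_0}=\max(N_0,N_1)$ then satisfies both requirements at once: the stopping criterion is met and $\overline{\textrm{IMF}}_1=U(I-D)^{\widetilde{N_0}}U^T\mathbf{s}$ differs from the HF component by less than $\eta$ in the $2$-norm, which is the assertion. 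I expect the only subtle point to be the two-sided bound $0<\lambda_f<1$ established above: the positivity is exactly what prevents the LF tone from being annihilated together with the HF tone, while the strict upper bound is what upgrades the operator's mere non-expansiveness into genuine geometric decay; everything else is bookkeeping already supplied by Theorems \ref{thm:ExplicitFormulaDiscreteIMF} and \ref{thm:DIF_conv_stop}.
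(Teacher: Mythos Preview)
Your proposal is correct and follows essentially the same route as the paper: diagonalize via the Fourier eigenbasis of the circulant matrix, observe that the HF modes sit on the zero eigenvalue while the LF modes carry an eigenvalue $\lambda_f\in(0,1)$, and conclude that the residual $(1-\lambda_f)^{N}\mathbf{s}_{\textrm{LF}}$ decays geometrically so that a large enough $\widetilde{N_0}$ meets both the stopping criterion and the $\eta$-bound. The paper's argument is terser and simply asserts $\lambda_k\in(0,1)$, whereas you supply the justification (nonnegativity from the double convolution, strict upper bound from the filter normalization, and positivity from $f$ lying below the smallest positive zero), but the underlying idea is identical.
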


The proof follows from Theorem \ref{thm:DIF_conv_stop} and the properties of the DFT. In particular, from \eqref{eq:IMF1_direct} it follows that the norm of the difference between the ground truth HF component and $\overline{\textrm{IMF}}_1=U(I-D)^{N_0} U^T s$ is equal to the norm of $(1-\lambda_k )^{N_0}u_k$, where $u_k$ equals the LF component. No matter the value of $\eta>0$ and $\lambda_k\in(0,\,1)$, there will always be a $\widetilde{N_0}>0$ such that the conclusion of the theorem follows.

We point out that similar results can be derived also when the frequency $f\in(0,\, 1)$ of the LF component do not belong to the set $\left\{\frac{1}{n},\ \frac{2}{n},\ldots,\ 1-\frac{1}{n}\right\}$. The calculations become more involved, but also in this case the algorithm can identify two components and separate them with a prefixed degree of accuracy. The results obtained applying the IF algorithm to the case of frequency $f$ irrational are shown in Figures \ref{fig:c1_irr_ideal} and \ref{fig:c1_irr_actual} in the following numerical results section. 

\subsection{Numerical results}
We test now the performance of the Iterative Filtering (IF) Matlab code\footnote{\url{www.cicone.com}} in separating two frequencies and we compare the outcome with the EMD and Synchrosqueezing performance, when applied to periodical discrete signals.

We recall the formula proposed by Flandrin et al. in \cite{flandrin2007oneOrTwo} which we use in this work to measure the performance in separating two pure tone components.
\begin{equation}\label{eq:c1}
c_1(a,\ f,\ \phi)\doteq \frac{\|\IMF_1\left(x,\ a,\ f\right) - \cos\left(2\pi x\right)\|_{\textrm{L}^2(T)}}{\|a\cos\left(2\pi f x+\phi\right)\|_{\textrm{L}^2(T)}}.
\end{equation}

In all the following tests we let the IF algorithm iterating for 10 millions times and we set the $\delta$ parameter for the stopping criterion to $10^{-20}$. Same results, in terms of the algorithm decomposition performance, can be obtained by setting the $\delta$ to the standard value of $10^{-3}$ and letting the algorithm to run for just a few iterations. In this work, we opt to put under stress the algorithm and show its ability to converge to a steady solution, which remains unchanged even after a high number of iterations.

We point out that, when we apply a decomposition method to a discrete and compactly supported signal, boundary errors will show up in most cases. In \cite{cicone2019BC} the authors estimated in a rigorous way the propagation of the error inside the decomposition due to the boundaries. In the following tests we apply the approach proposed in \cite{stallone2020new} to mitigate these errors.

We start assuming that the IF method is combined with an ideal way of selecting the mask length $L$. From Figure \ref{fig:c1_ideal}, where the performance of this ideal version of the algorithm are shown, we see that in this case the algorithm can separate, up to machine precision, the two components for any combination of amplitude $a$ and frequency $f<1-\frac{1}{n}$ of the LF component, as predicted by Corollaries  \ref{cor:DIF_resolution} and  \ref{cor:DIF_Stopping_resolution}.

\begin{figure}[h]
\centering
\includegraphics[width=0.5\linewidth]{./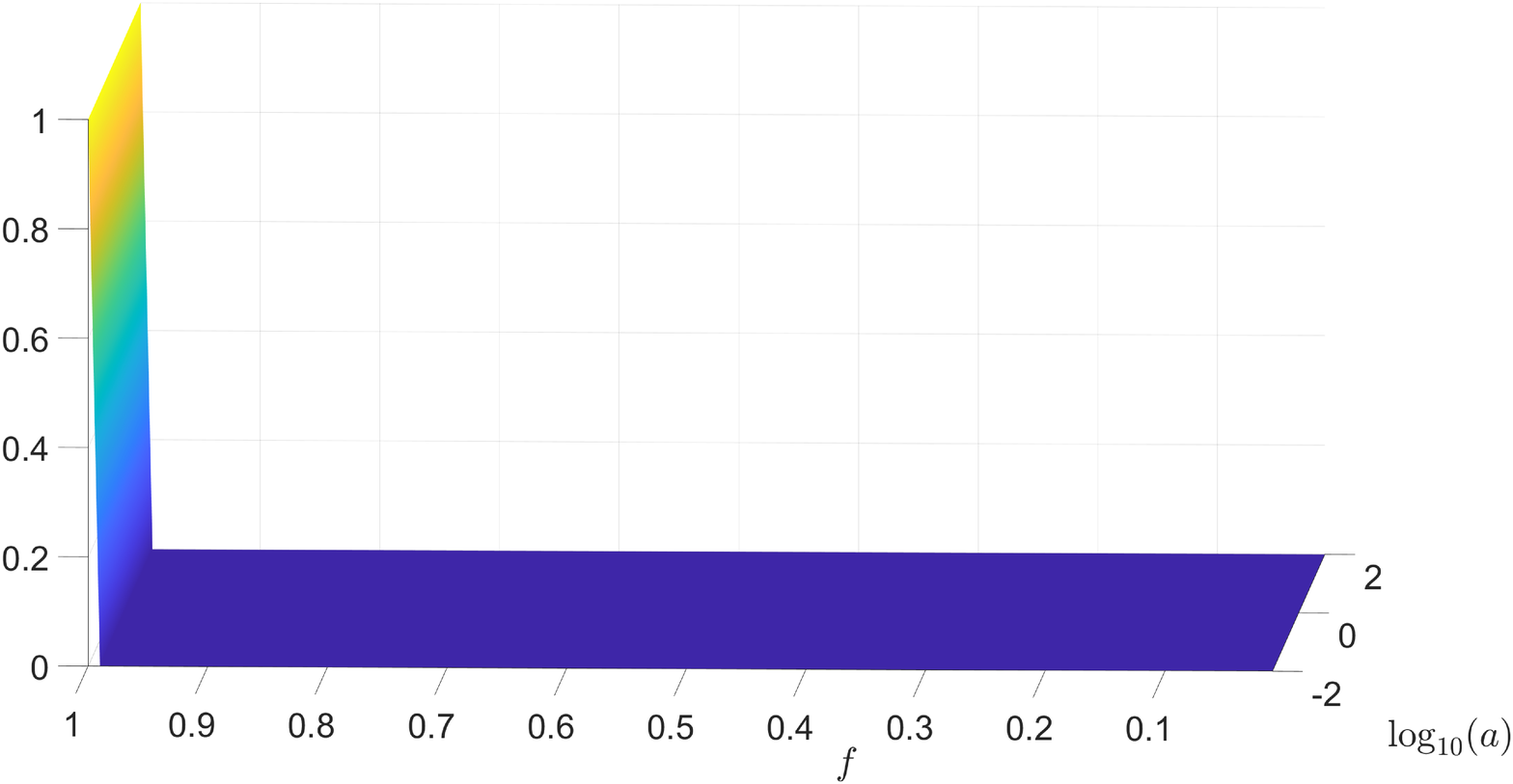}~\includegraphics[width=0.5\linewidth]{./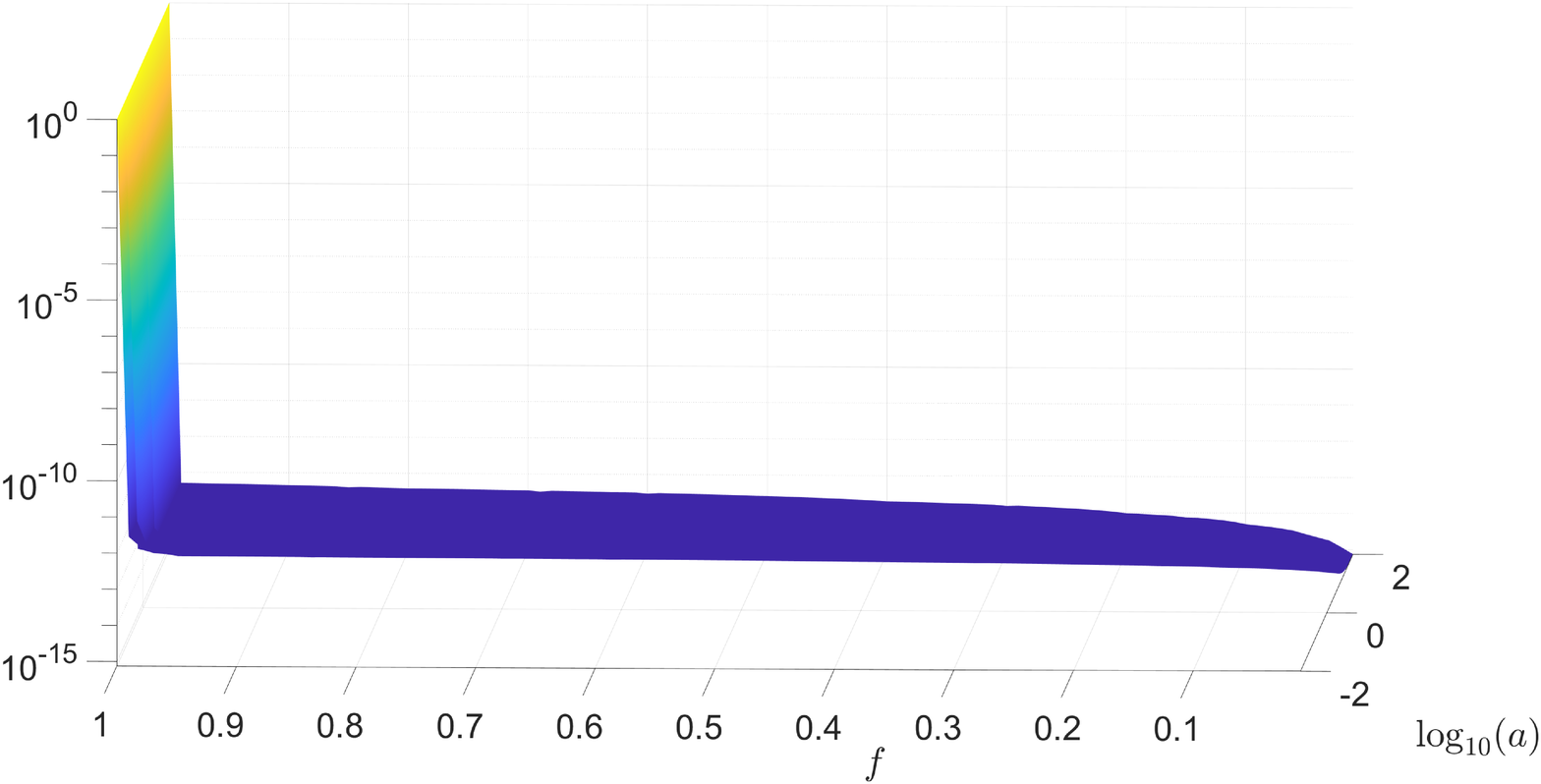}
\caption{Left panel, IF performance in separating two-tones with rational frequencies from the signal \eqref{eq:discrete_signal_Rn}, measured using the criterion \eqref{eq:c1} averaged over $\phi$. We assume that the algorithm selects always the ideal mask length $L$. Right panel, we show the same plot reported on the left, but with  vertical axis in log scale to highlight the performance of the technique.}
\label{fig:c1_ideal}
\end{figure}

If, instead, we use the mask length selection approach currently implemented in IF, we obtain the performance shown in Figure \ref{fig:c1_actual}, which can be perfectly explained using the analysis made by Flandrin et al. in \cite{flandrin2007oneOrTwo}, that regards the actual extrema of the signal $s$ defined in \eqref{eq:discrete_signal_Rn}. In fact, when the amplitude $a$ of the LF component becomes big enough, in particular when $a\geq\frac{1}{f}$, the HF component contributes just to defining the local change in the concavity of the signal and no more to the local extrema of $s$. Therefore, unless properly guided, the IF method is no more able in this case to identify and separate two frequencies in the given signal. If we compare these last performance with the ones of EMD and Synchrosqueezing methods, we can see how IF algorithm, even with the currently implemented mask length selection approach, outperform both EMD and Synchrosqueezing in separating close by frequencies for values of the amplitude $a<\frac{1}{f}$. We observe that the Synchrosqueezing plot, reported in  \cite{wu2011oneOrTwo} and shown,  for readers convenience, in the right panel of Figure \ref{fig:c1_original}, does not include the range of amplitudes $[10^{0.4},\ 10^2]$.

\begin{figure}
\centering
\includegraphics[width=0.5\linewidth]{./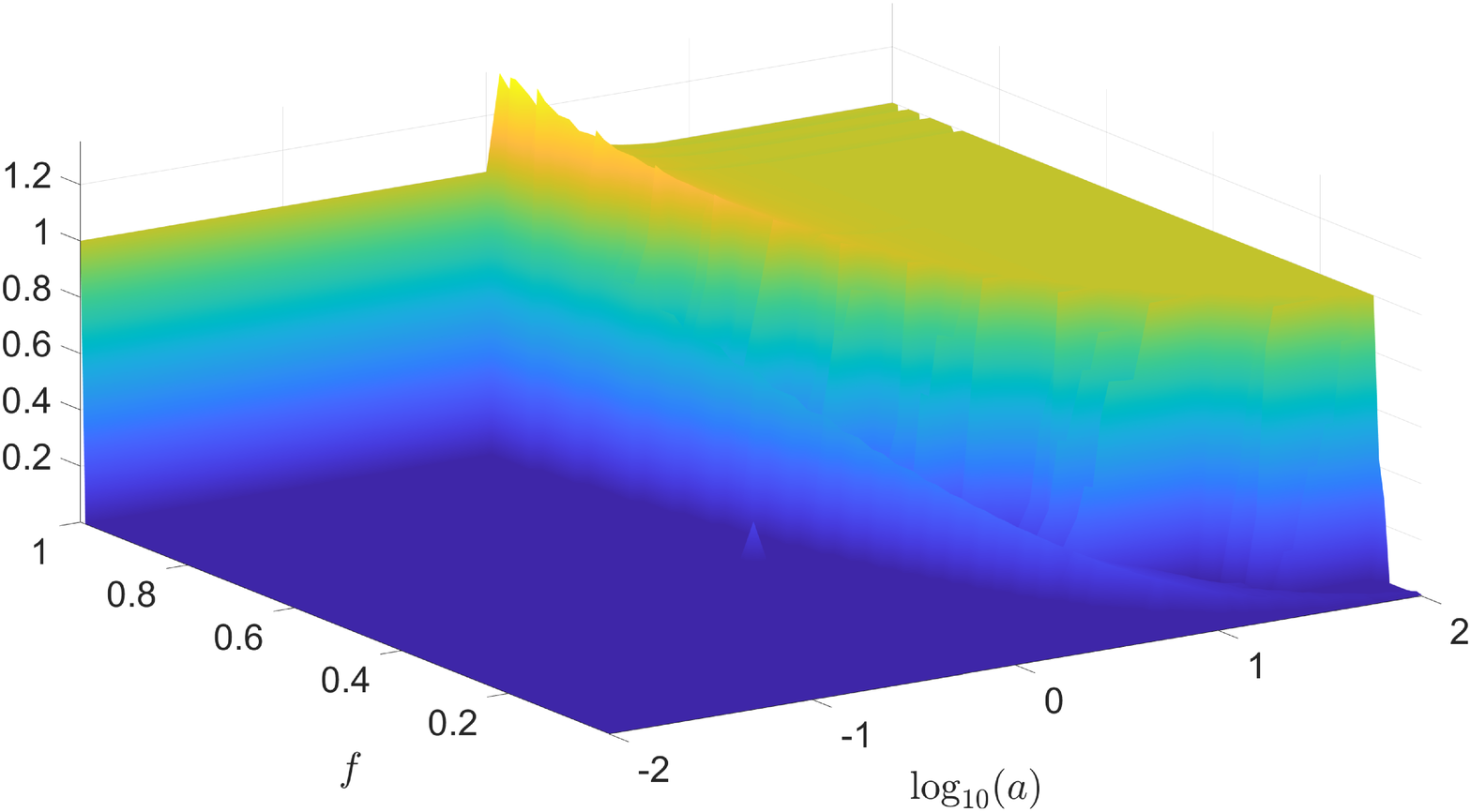}~\includegraphics[width=0.5\linewidth]{./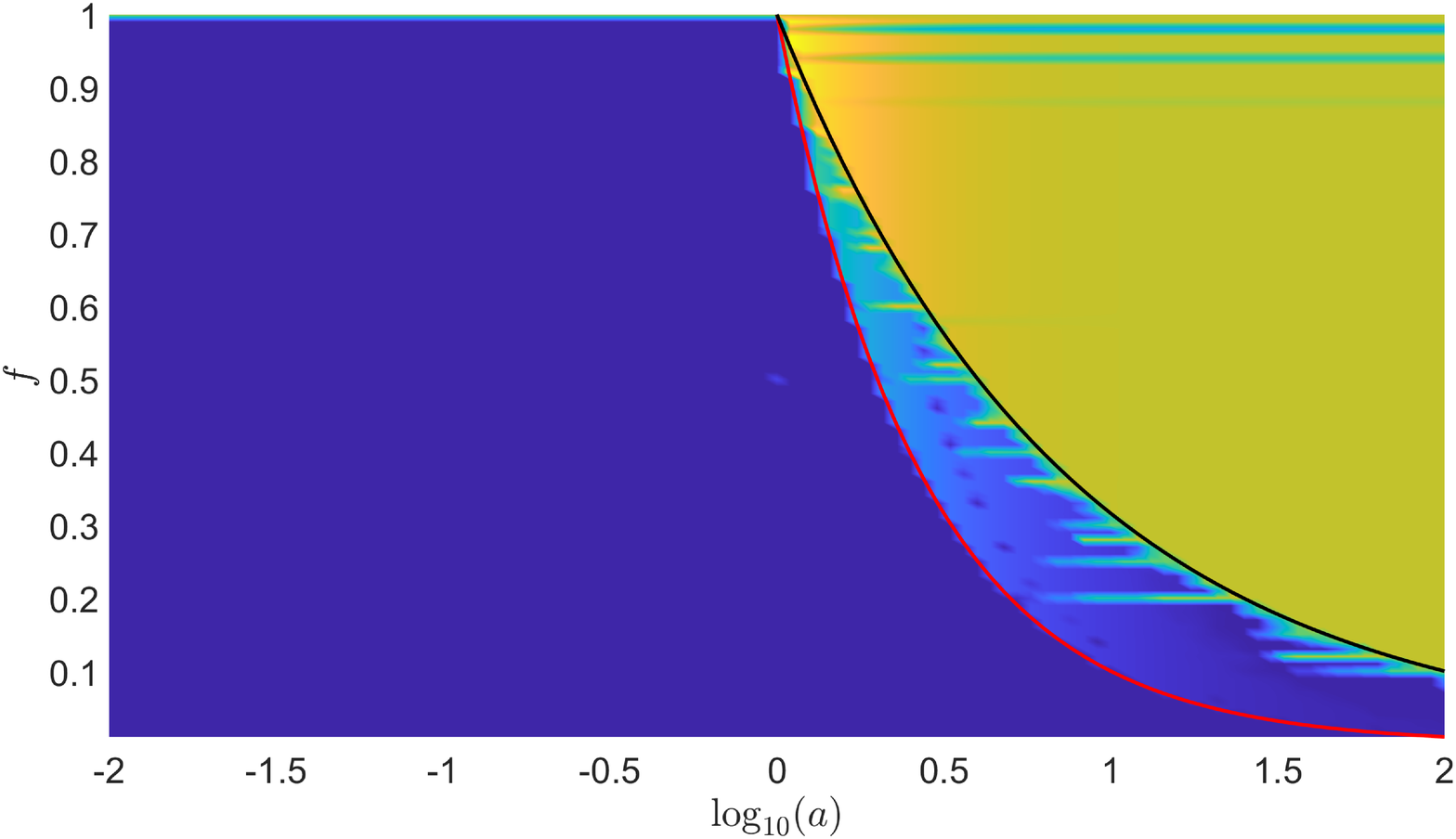}
\caption{Left panel, IF performance in separating two-tones with rational frequencies from the signal \eqref{eq:discrete_signal_Rn}, measured using the criterion \eqref{eq:c1} averaged over $\phi$, when the currently implemented mask length selection is used. Right panel, the projection onto the $(a,\ f)$-plane. The critical curves $af = 1$ and $af^2 = 1$ are plotted in solid red and black, respectively.}
\label{fig:c1_actual}
\end{figure}

\begin{figure}
\centering
\includegraphics[width=0.5\linewidth]{./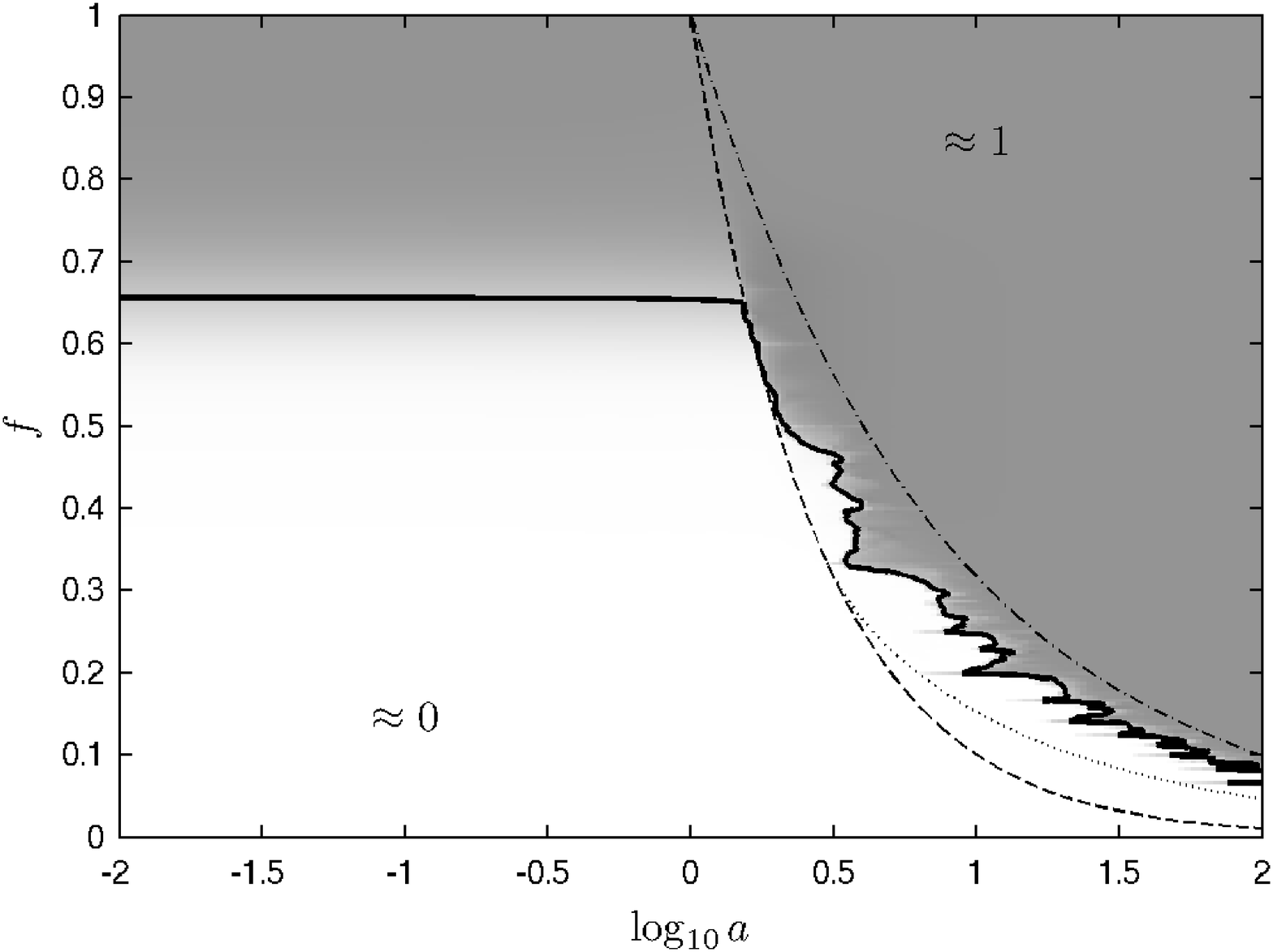}~\includegraphics[width=0.5\linewidth]{./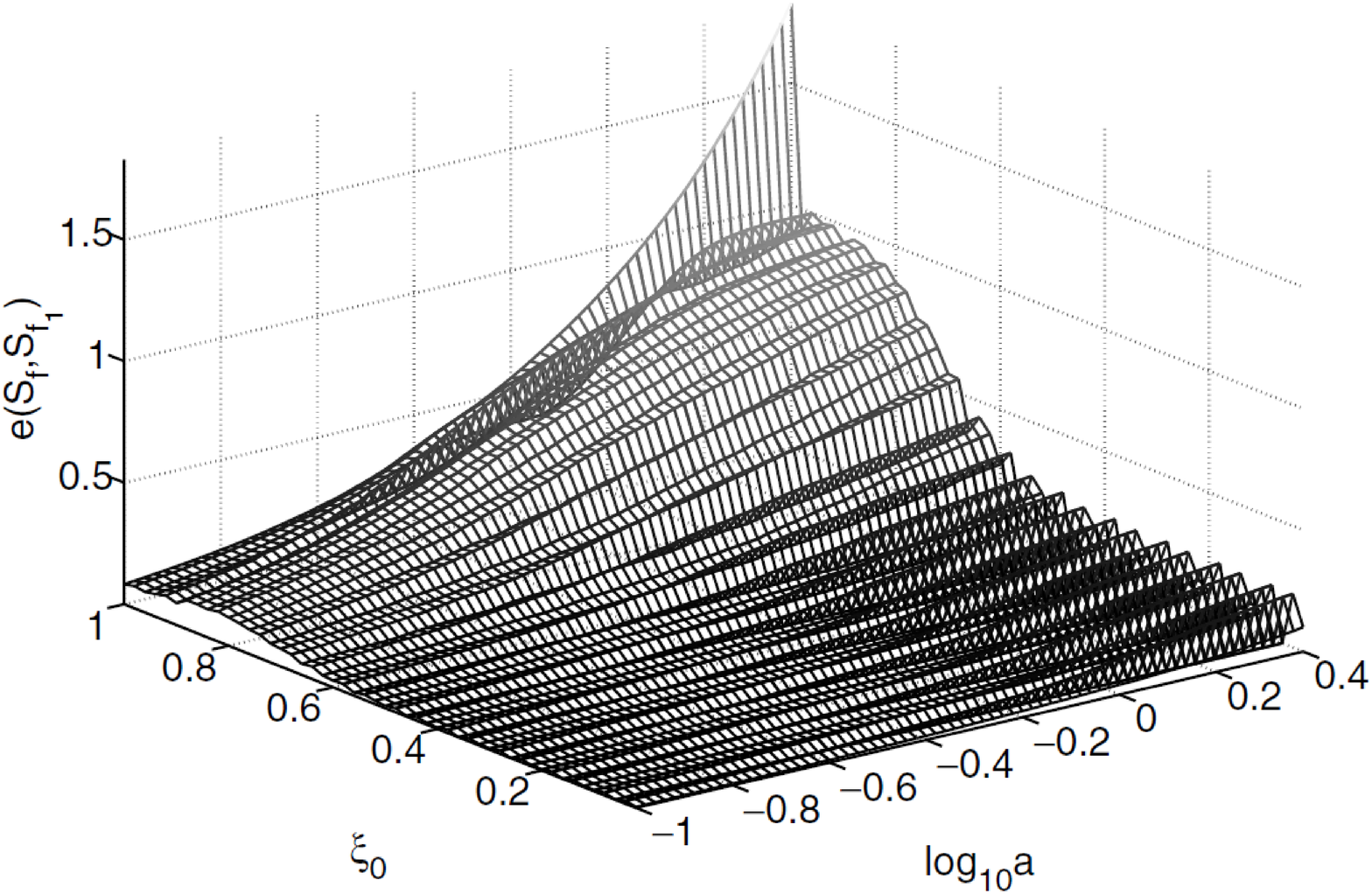}
\caption{EMD, left, and Synchrosqueezing, right, performance measure of separation for two-tones original images shown in \cite{flandrin2007oneOrTwo} and \cite{wu2011oneOrTwo}.}
\label{fig:c1_original}
\end{figure}

In order to improve the IF method mask length selection abilities, we can use the first, second or higher order derivatives of the signal. In doing so, we can enlarge the region in which the IF method, with its implemented mask length selection approach, can perfectly separate, up to machine precision, the two frequencies, Figure \ref{fig:c1_actual_2}.

This result is explained in the following theorem.

\begin{proposition}\label{pro:FlandrinExtended}
Given the signal $s$ defined in \eqref{eq:discrete_signal_Rn}, there exists $d\in\N$ big enough such that, by computing the DIF mask length based on the $d$--th derivative of $s$ signal, DIF algorithm can address the one or two frequencies question.
\end{proposition}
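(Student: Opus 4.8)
The plan is to exploit the elementary fact that differentiation multiplies the amplitude of each pure tone by a power of its frequency, so that repeated differentiation progressively suppresses the low frequency (LF) component relative to the high frequency (HF) one, eventually pushing the extrema-based mask length selection into the favorable regime analyzed by Flandrin and Rilling in \cite{flandrin2007oneOrTwo}. First I would compute, for the continuous counterpart of \eqref{eq:discrete_signal_Rn},
\[
\frac{\d^d}{\d x^d}\, s(x,a,f) = (2\pi)^d\left[\cos\!\left(2\pi x + \tfrac{d\pi}{2}\right) + a f^d \cos\!\left(2\pi f x + \phi + \tfrac{d\pi}{2}\right)\right].
\]
Up to the global constant $(2\pi)^d$, which does not affect the location of extrema, the $d$--th derivative is again a two-tone signal of the form \eqref{eq:discrete_signal_Rn}, but now with HF amplitude $1$ and LF amplitude $a f^d$. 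Since the DIF mask length is selected from the extrema of the signal that is handed to the selector, the only quantity that matters is the relative size of the two tones in that signal.

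Next I would analyze the extrema of $s^{(d)}$, which are the zeros of $s^{(d+1)}$. By the computation above, $s^{(d+1)}$ is proportional to a sine or cosine sum with HF amplitude $1$ and LF amplitude $a f^{d+1}$. Invoking the extrema analysis of \cite{flandrin2007oneOrTwo}, the sign changes of such a sum are controlled by the HF component exactly when its amplitude dominates that of the LF one, i.e. when
\[
a f^{d+1} < 1 .
\]
This is fully consistent with the two critical curves already visible in Figure \ref{fig:c1_actual}: the choice $d=0$ reproduces the extrema threshold $af<1$, while $d=1$ reproduces the concavity (inflection point) threshold $af^2<1$.

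To conclude, because $f\in(0,1)$ we have $f^{d+1}\to 0$ as $d\to\infty$, so for any fixed amplitude $a$ it suffices to take any $d\in\N$ with $d+1 > \log a / \log(1/f)$ (and $d=0$ already works whenever $a\le 1$) to guarantee $a f^{d+1}<1$. For such a $d$ the extrema of $s^{(d)}$ are spaced at the HF scale, so the mask length computed from them matches the HF period and the scaled filter $w_L$ has the lowest positive zero of its DFT at frequency $1$. The hypotheses of Corollary \ref{cor:DIF_resolution} (and of Corollary \ref{cor:DIF_Stopping_resolution} in the stopping-criterion version) are then satisfied, whence DIF resolves $s$ into its HF and LF components and thereby addresses the one or two frequencies question.

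The delicate point I expect to be the main obstacle is the second step: turning the informal statement ``HF amplitude exceeds LF amplitude'' into a rigorous claim that the number and spacing of the zeros of $s^{(d+1)}$ are genuinely governed by the HF tone, and then tying this to the actual output of the mask length selector. The difficulty concentrates near the zeros of the dominant tone, where its slope is small and one must rule out spurious crossings injected by the subdominant tone. Since this is precisely the content of the Flandrin--Rilling analysis already used for the $d=0$ case, I would reduce to it through the amplitude rescaling established above rather than redo it from scratch. A secondary, purely practical caveat is that in the discrete setting the derivatives are realized by finite differences, which amplify high-frequency content and boundary artifacts; for an existence statement, however, it is enough to reason with the exact derivative.
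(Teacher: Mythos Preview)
Your proposal is correct and follows essentially the same approach as the paper: both arguments hinge on the observation that the $d$--th derivative rescales the LF amplitude by $f^d$, so that for $d$ large enough (the paper writes $af^d\le 1$, you more carefully write $af^{d+1}<1$ by looking at zeros of $s^{(d+1)}$) the extrema of $s^{(d)}$ are governed by the HF tone and the mask length selector succeeds. Your write-up is considerably more detailed than the paper's short paragraph---the explicit derivative computation, the quantitative bound on $d$, the link to Corollaries~\ref{cor:DIF_resolution} and~\ref{cor:DIF_Stopping_resolution}, and the honest discussion of the discrete/finite-difference caveat---but the underlying idea is the same.
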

\begin{proof}
Given $s$ defined in \eqref{eq:discrete_signal_Rn}, for any $a\in\R$, there exists $d\in\N$ big enough such that $af^d\leq 1$ and the LF component of the $d$--th derivative of the signal $s$ will be comparable or negligible with respect to the HF component. Therefore any mask length selection process based on the extrema relative distance applied to $s^{(d)}$ will be able to identify the mask length required by DIF to separate properly the HF and LF components from the original signal $s$.
\end{proof}

\begin{figure}
\centering
\includegraphics[width=0.5\linewidth]{./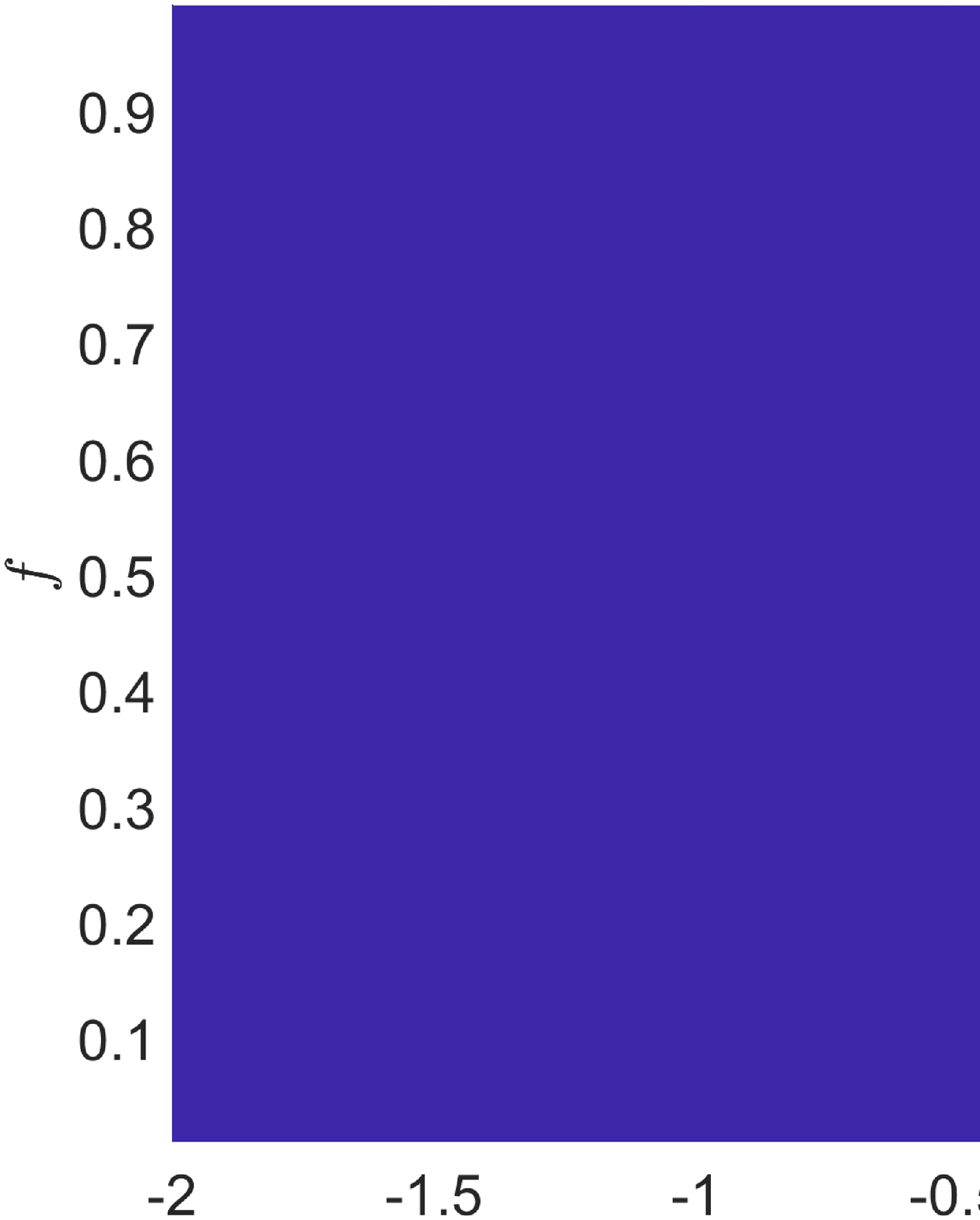}~\includegraphics[width=0.5\linewidth]{./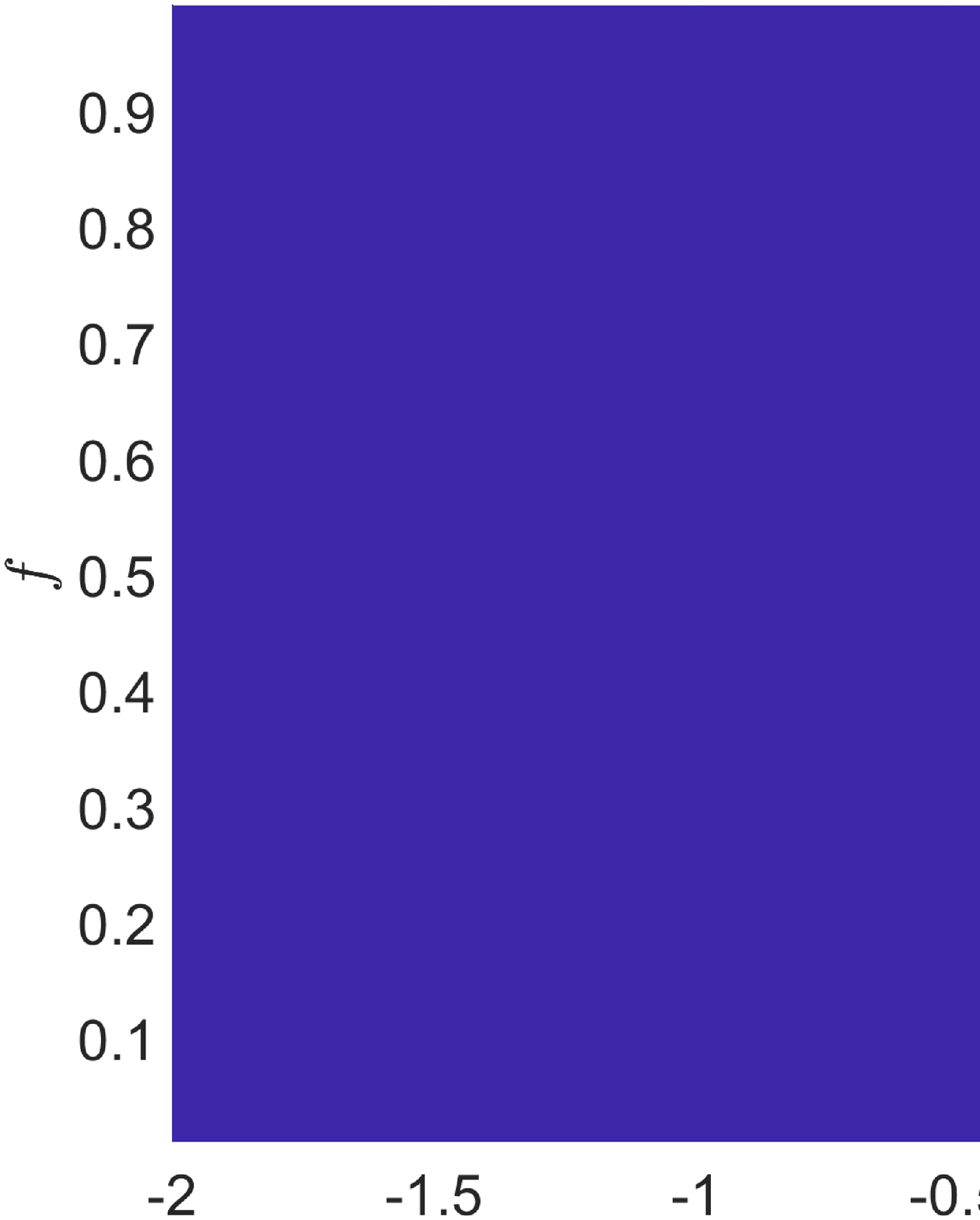}
\caption{IF performance in separating two-tones with rational frequencies from signal \eqref{eq:discrete_signal_Rn} when we use the first derivative (left panel) and the second derivative (right panel) of the signal to compute the mask length. The critical curves plotted are: $af = 1$ (red), $af^2 = 1$ (black), $af^3 = 1$ (green), $af^4 = 1$ (white).}
\label{fig:c1_actual_2}
\end{figure}

The last case we study in this section regards the separation of the two frequencies from signal \eqref{eq:discrete_signal_Rn} when $f$ is irrational. If we guide properly the IF method in its mask length selection we obtain remarkable results as shown in Figure \ref{fig:c1_irr_ideal}. If, instead, we use the currently implemented mask length selection approach, we obtain the performance shown in Figure \ref{fig:c1_irr_actual} which, again, fit well with the analysis made by Flandrin et al. in \cite{flandrin2007oneOrTwo} regarding the actual extrema of the signal $s$.

\begin{figure}
\centering
\includegraphics[width=0.5\linewidth]{./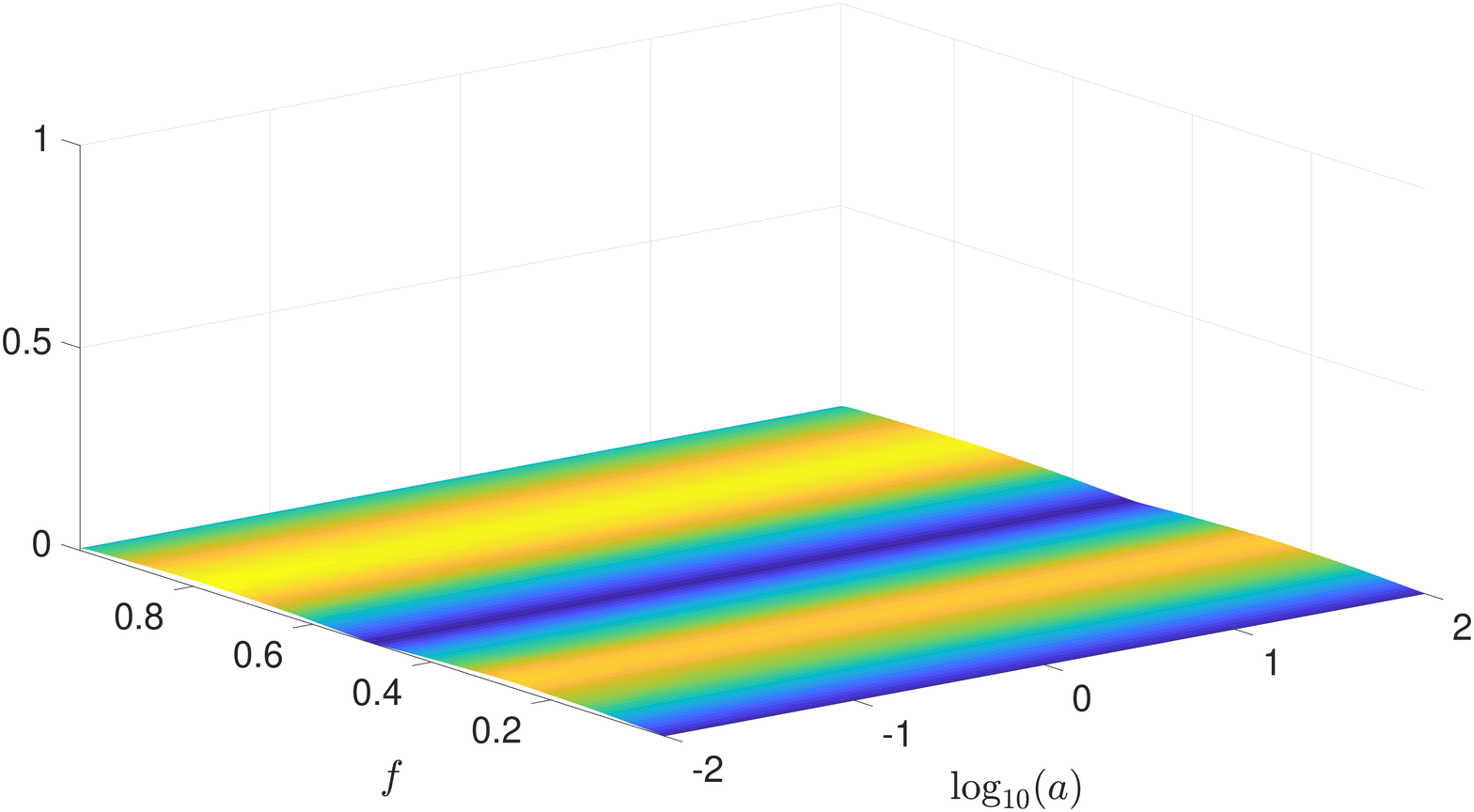}~\includegraphics[width=0.5\linewidth]{./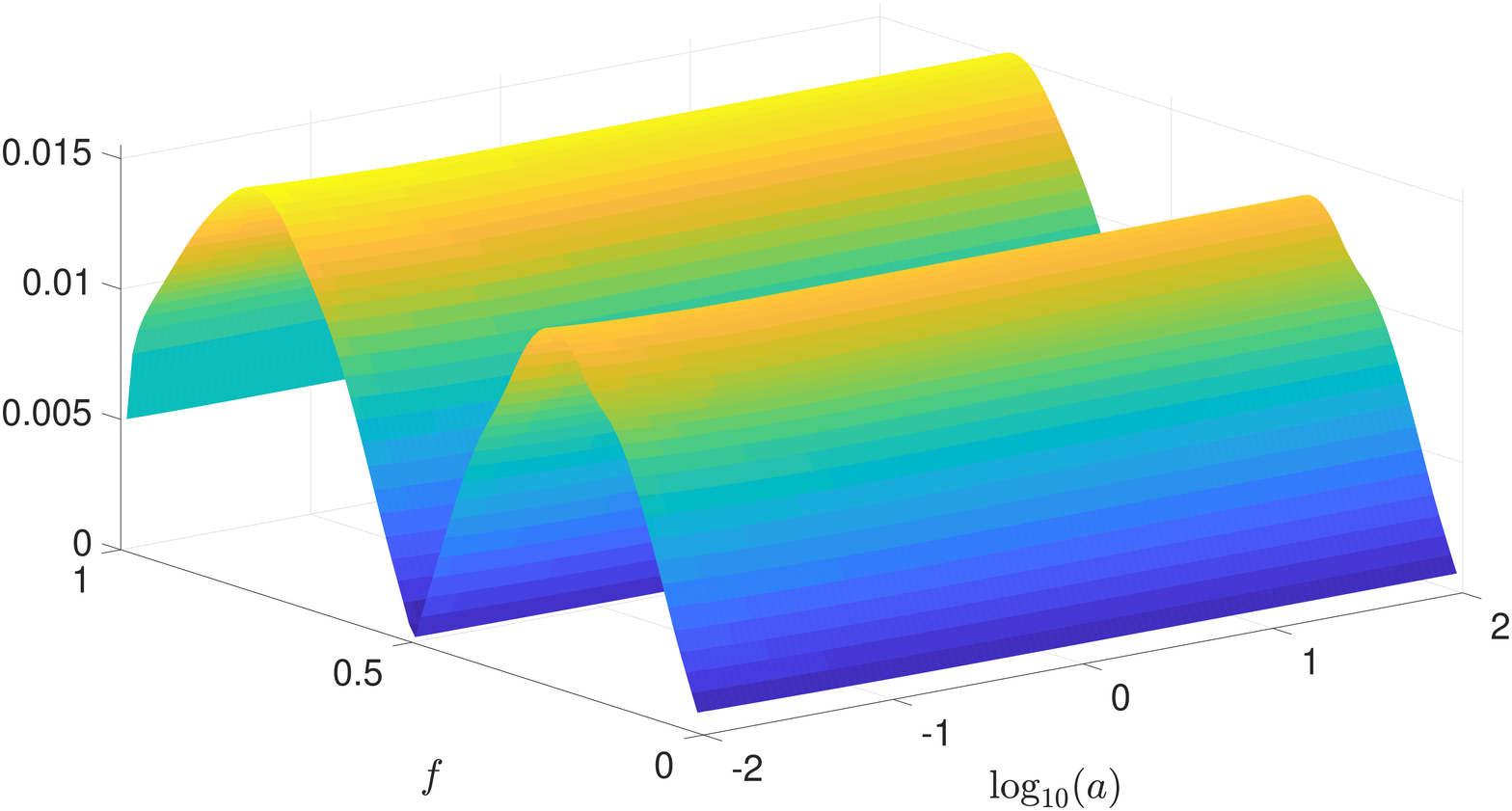}
\caption{IF performance in separating two-tones with irrational frequencies from signal \eqref{eq:discrete_signal_Rn}, when we assume an ideal way of selecting the mask length. We use criterion \eqref{eq:c1} averaged over $\phi$ to produce the plot shown in the left panel. In the right panel we present its zoomed in version.}
\label{fig:c1_irr_ideal}
\end{figure}

\begin{figure}
\centering
\includegraphics[width=0.5\linewidth]{./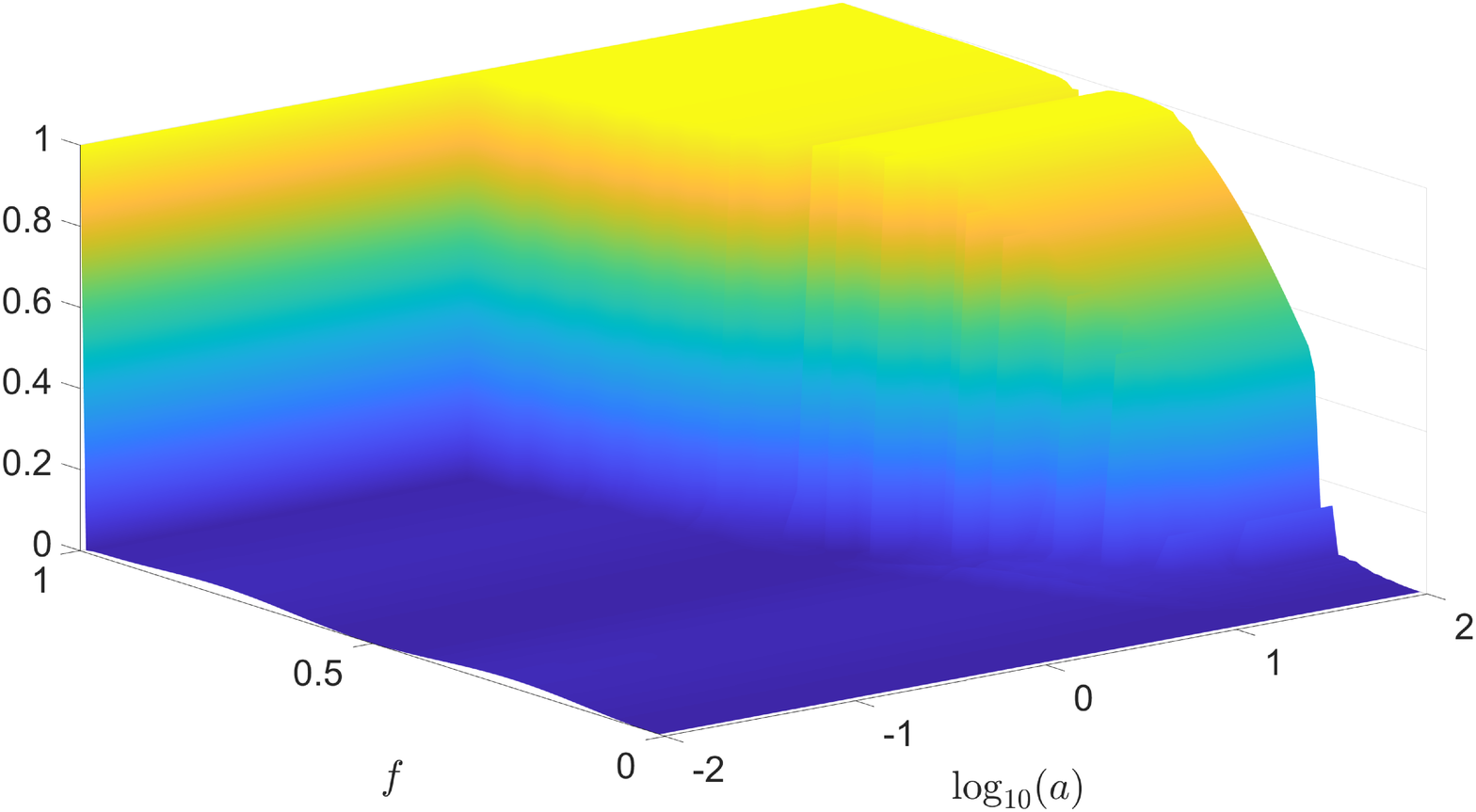}~\includegraphics[width=0.5\linewidth]{./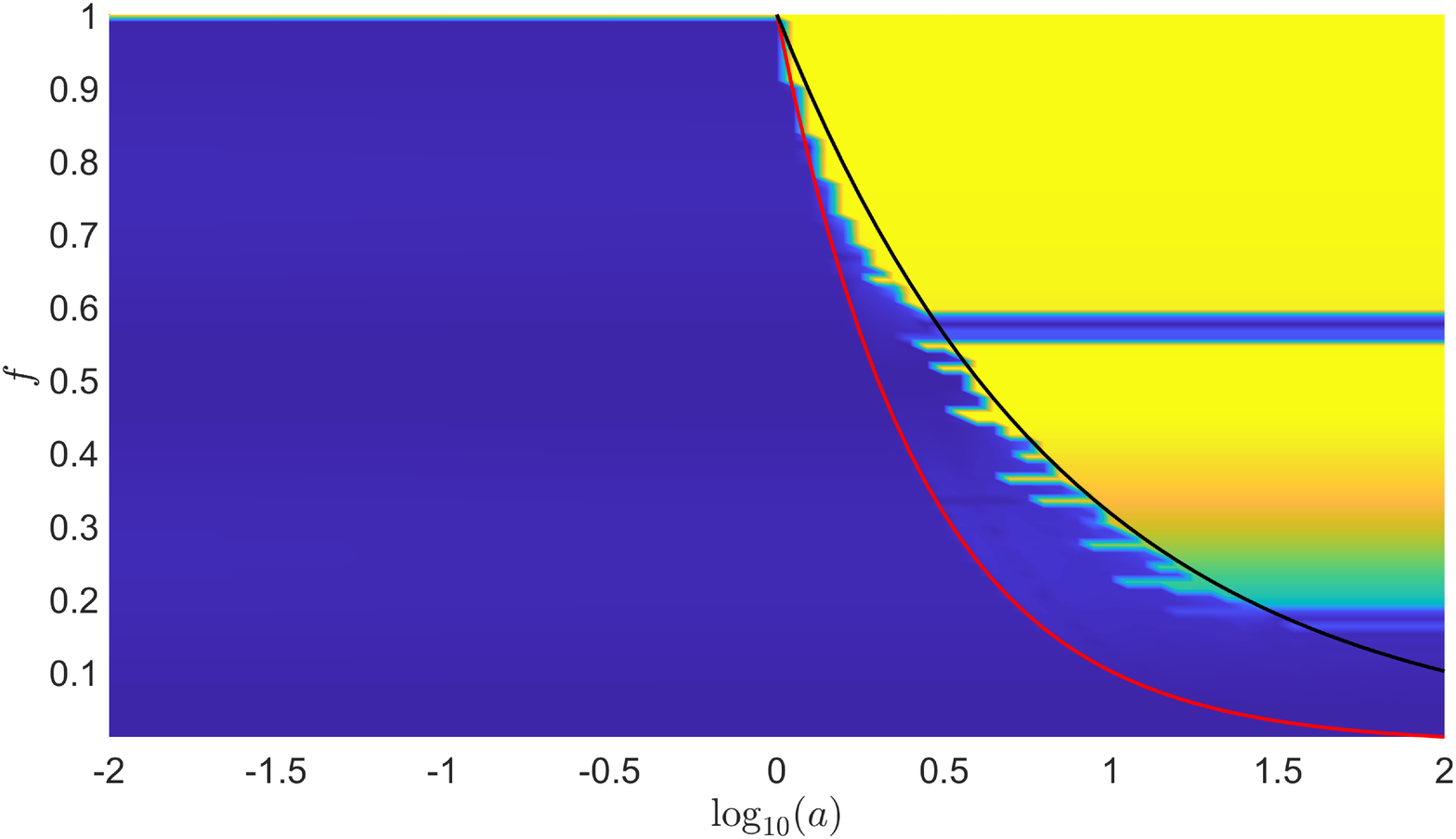}
\caption{Left panel, IF performance in separating two-tones with irrational frequencies from signal \eqref{eq:discrete_signal_Rn}, when the currently implemented approach for mask length selection is used. The results shown are obtained using criterion \eqref{eq:c1} when $\phi=3$. Right panel, the projection onto the $(a,\ f)$-plane. The critical curves $af = 1$ and $af^2 = 1$ are plotted in solid red and black, respectively.}
\label{fig:c1_irr_actual}
\end{figure}

\section{Conclusions}

The decomposition of non-stationary signals into simple oscillatory components, and their subsequent analysis, is an active research direction, which has a broad impact in many applied fields of research. Among the questions that researchers have been trying to address in recent years, there is the ``one or two frequencies'' problem, i.e. up to which extent a decomposition method is able to separate and extract two close-by stationary frequencies? This key question was originally raised and addressed by Rilling and Flandrin for the method called Empirical Mode Decomposition, in the paper \cite{flandrin2007oneOrTwo}. Few years later, Wu, Flandrin, and Daubechies raised and addressed the same question for the Synchrosqueezing algorithm \cite{wu2011oneOrTwo}.

In this work, given the recent theoretical results obtained in the analysis of the Iterative Filtering (IF) algorithm \cite{cicone2016adaptive,cicone2019Direct,cicone2021numerical}, and the impact that this method is having in many applied fields of research, see for instance \cite{coifman2017,sharma2017,mitiche2018,li2018entropy,stallone2020new} and references there in, we decided to address the ``one or two frequencies'' question also for the IF method.

Starting from the numerical analysis of the IF technique \cite{cicone2021numerical}, in this work we provided several new theoretical results. These new insights on the IF method properties allowed to address the question if the IF method can separate a signal into one or two frequencies, both in the ideal case of a continuously sampled signal and the more practical case of a discretely sampled one. In particular, we completed the numerical analysis presented in \cite{cicone2021numerical}, by studying also the case of a aperiodical discrete signal. Furthermore, we showed how to design filters, in the discrete setting, such that two important properties are guaranteed: the a priori convergence of the IF algorithm and, at the same time, the converge of IF to a non-zero simple oscillatory component, even if we iterate the decomposition steps for millions of times.

In the numerical section, we presented various examples that confirm the results obtained in the theoretical analysis of the algorithm. From these results, we were able to observe that the implemented IF algorithm either matches or outperforms the performance of both the Empirical Mode Decomposition and the Synchrosqueezing technique in separating two stationary frequencies.

Furthermore, we proposed the innovative idea of using the signal derivatives to increase the algorithm's ability to properly separate two frequencies. We provided theoretical explanations of why this approach can work, and we present numerical examples which confirm this claim.

At this point, the question that becomes natural to ask is whether, and up to which extent, the IF approach can separate two non-stationary components from a given signal. This is, to the best of our knowledge, a completely unexplored direction of research for any decomposition method proposed so far in the literature. We plan to tackle this question from the IF method perspective in future work.

Another open problem regards the proper computation of the IF filter support size. As shown in the numerical section, an ideal mask length selection method would allow the IF method to separate the two frequencies in any scenario. We leave this problem to future research.

\section*{Acknowledgments}
A. Cicone and S. Serra-Capizzano belong to the Units of the Italian Gruppo Nazionale di Calcolo Scientifico (GNCS) of the Istituto Nazionale di Alta Matematica (INdAM), Rome, Italy.
Their researches were partially supported by the GNCS. H. Zhou work was partially supported by research grants NSF DMS-1830225 and ONR N00014-21-1-2891.

\end{document}